\font\emailfont=cmtt10
\newcommand\commentable[1]{#1}
\newcommand{\HF}{HF}
\newtheorem{theorem}{Theorem}[section]
\newtheorem{prop}[theorem]{Proposition}
\newtheorem{cor}[theorem]{Corollary}
\newtheorem{defn}[theorem]{Definition}
\newtheorem{remark}[theorem]{Remark}
\def\endproof{\relax\ifmmode\expandafter\endproofmath\else
  \unskip\nobreak\hfil\penalty50\hskip.75em\hbox{}\nobreak\hfil\bull
  {\parfillskip=0pt \finalhyphendemerits=0 \bigbreak}\fi}
\def\endproofmath$${\eqno\bull$$\bigbreak}
\def\bull{\vbox{\hrule\hbox{\vrule\kern3pt\vbox{\kern6pt}\kern3pt\vrule}\hrule}}
\newcommand{\R}{\mathbb{R}}
\newcommand{\C}{\mathbb{C}}
\newcommand{\Z}{\mathbb{Z}}
\newcommand{\OneHalf}{\frac{1}{2}}
\newcommand{\Zmod}[1]{\Z/{#1}\Z}
\newcommand{\cm}{\cdot}
\newcommand\relspinc{\underline{\spinc}}
\newcommand\Filt{{\mathcal F}_\spinc}
\newcommand\x{\mathbf x}
\newcommand\y{\mathbf y}
\newcommand\ModFlow{\mathcal M}
\newcommand\ModSphere{\ModFlow\left({\mathbb S}\longrightarrow 
\Sym^{g-1}(\Sigma_{1})\times \Sym^2(\Sigma_{2})\right)}
\newcommand\ModSpheres\ModSphere
\newcommand\CF{CF}
\newcommand\CFa{\widehat{CF}}
\newcommand\CFp{\CFb}
\newcommand\CFm{\CF^-}
\newcommand\HFp{\HFb}
\newcommand\HFm{\HF^-}
\newcommand\CFinf{CF^\infty}
\newcommand\HFinf{HF^\infty}
\newcommand\CFb{CF^+}
\newcommand\HFa{\widehat{HF}}
\newcommand\HFb{HF^+}
\newcommand\UnparModSp{\widehat \ModSp}
\newcommand\UnparModFlow\UnparModSp
\newcommand\Mod\ModSp
\newcommand{\spinc}{\mathfrak s}
\newcommand{\spinct}{\mathfrak t}
\newcommand\ModMaps{\mathcal M}
\newcommand\ModSp\ModMaps
\newcommand\Ta{{\mathbb T}_{\alpha}}
\newcommand\Tb{{\mathbb T}_{\beta}}
\newcommand\alphas{\mbox{\boldmath$\alpha$}}
\newcommand\betas{\mbox{\boldmath$\beta$}}
\newcommand\PerDom{\mathcal P}
\newcommand\spincrel\relspinc
\newcommand\CFK{CFK}
\newcommand\HFK{HFK}
\newcommand\CFKinf{\CFK^{\infty}}
\newcommand\HFKa{\widehat\HFK}
\newcommand\BasePt{w}
\newcommand\FiltPt{z}
\newcommand\Dual{\mathcal D}
\newcommand\Duality\Dual
\newcommand\EulerMeasure{\widehat\chi}
\newcommand\spinck{\mathfrak k}
\newcommand\ons{Ozsv{\'a}th and Szab{\'o}}
\newcommand\os{Ozsv{\'a}th-Szab{\'o}}
\newcommand\Mapn{{\widehat F}_{\!\sd n,m}}
\newcommand\tauxF{\tau_{[x]}(Y,[F],K)}
\newcommand\tauydual{\tau^*_{[y]}(Y,K)}
\newcommand\tauyup{\tau^*_{[y]}(Y,K)}
\newcommand\tauydualF{\tau^*_{[y]}(Y,[F],K)}
\newcommand\Cij[2]{C_\spinc\{i{#1},j{#2}\}}
\newcommand\Ci[1]{C_\spinc\{i{#1}\}}
\newcommand\SpinC{\mathrm{Spin}^c}
\newcommand\tauc{\tau_{\xi}(K)}
\newcommand\taucF{\tau_{\xi}([F],K)}
\newcommand\taux{\tau_{[x]}(Y,K)}
\newcommand\tauy{\tau_{[y]}(-Y,K)}
\newcommand\contact{c(\xi)}
\newcommand\filtYF[1]{ {\Filt}(Y,[F],K,{#1})}
\newcommand\filtY[1]{ {\Filt}(Y,K,{#1})}
\newcommand\Sym{\mathrm{Sym}}
\newcommand\sd{\mbox{-}}
\newcommand\sympW{W_K}
\newcommand\sympWbar{\overline{W}_K}
\title[{An invariant of knots in a contact manifold}] 
{An Ozsv{\'a}th-Szab{\'o} Floer homology invariant of knots in a contact manifold}
\author[Matthew Hedden]{Matthew Hedden}
\address{Department of
Mathematics, Massachusetts Institute of Technology\newline
\indent{\emailfont{mhedden@math.mit.edu}}}
\thanks{Matthew Hedden was supported by an NSF postdoctoral fellowship during the course of this work}
\begin{document}

\begin{abstract} Using the knot Floer homology filtration, we define invariants associated to a knot in a three-manifold possessing non-vanishing Floer co(homology) classes. In the case of the \os \ contact invariant we obtain an invariant of knots in a contact three-manifold. This invariant provides an upper bound for the Thurston-Bennequin plus rotation number of any Legendrian realization of the knot.   We use it to demonstrate the first systematic construction of prime knots in contact manifolds other than $S^3$ with negative maximal Thurston-Bennequin invariant.  Perhaps more interesting, our invariant provides a criterion for an open book to induce a tight contact structure. A corollary is that if a manifold possesses contact structures with distinct non-vanishing \os \ invariants, then any fibered knot can realize the classical Eliashberg-Bennequin bound in at most one of these contact structures.  
\end{abstract}

\maketitle

\section{Introduction}

A contact structure, $\xi$, on a closed oriented three-manifold, $Y$, is an oriented two-dimensional sub-bundle of the tangent bundle, $TY$, which is completely non-integrable.  This means there do not exist surfaces embedded in $Y$ whose tangent planes lie in $\xi$ in any open subset of the surface.  See \cite{Etnyre1} for an introduction.   It has been known for some time that there is a dichotomy between contact structures on a three-manifold: every contact structure falls into one of two classes, {\em overtwisted} or {\em tight}.  These classes are determined by the existence (in the case of overtwisted) or non-existence (in the case of tight) of an embedded disk whose interior is transverse to $\xi$ everywhere except one point, and whose boundary is tangent to $\xi$ .  A fundamental theorem of Eliashberg states that the overtwisted contact structures are classified by the homotopy type of the contact structure as a two-plane field.  Tight contact structures, on the other hand, have proved to be  much more difficult to understand and their classification is presently out of reach for a general three-manifold.  Since the definition of overtwisted involves the existence of a particular type of unknotted circle tangent to $\xi$ (unknotted in the sense that it bounds a disk), it may not be surprising to find that one of the ways in which tight contact structures differ from overtwisted involves knot theory.  To describe this distinction, we first recall some basic definitions from the theory of Legendrian knots. 

A knot which is everywhere tangent to $\xi$ is called {\em Legendrian}.  Given a Legendrian knot, $K$, we can form a push-off, $K'$, of $K$ using a vector field tangent to the contact planes but orthogonal to the tangent vector field of $K$.  If $K$ is null-homologous then the linking number lk$(K,K')$ is well-defined.  This linking number is called the Thurston-Bennequin number of $K$ and is denoted $tb(K)$.  It is immediate from the definition that $tb(K)$ is invariant under isotopy of $K$ through Legendrian knots, so-called {\em Legendrian isotopy}.

There is another easily defined integer-valued invariant of Legendrian knots.  Let $K$ as above be a null-homologous Legendrian knot with Seifert surface $F$.  Since the contact structure restricted to $F$, $\xi|_F$, is a real oriented two-dimensional vector bundle on a surface with boundary, it is necessarily trivial.  Picking a trivialization, $$\begin{CD} \tau: \xi|_F @>{\cong}>> F\times \R^2, \end{CD}$$ the tangent vector field to $K$ yields a map $u:S^1\rightarrow \R^2\sd\{0\}$.  We define the {\em rotation number of K}, $rot_F(K)$, to be the winding number of this map.  Note that the rotation number depends on our choice of Seifert surface, but only through its homology class $[F]\in H_2(Y\!-\!K;\Z)\cong H_2(Y;Z)$.  It is straightforward to verify that the rotation number, like the Thurston-Bennequin number, is invariant under Legendrian isotopy.  We refer to the Thurston-Bennequin and rotation numbers of a Legendrian knot as its ``classical'' invariants.

A fundamental theorem of Eliashberg \cite{Eliashberg} states that for tight contact structures, the classical invariants of Legendrian knots are constrained by the topology of the three-manifold:

\begin{theorem} \label{thm:Eliashberg} (Eliashberg-Bennequin inequality)
	Let $\xi$ be a {\em tight} contact structure on a three-manifold, $Y$.  Then for a null-homologous knot $K\hookrightarrow Y$ and Seifert surface, $F$, we have $$tb(\tilde{K})+|rot_F(\tilde{K})|\le 2g(F)-1,$$
	\noindent where $\tilde{K}$ is any Legendrian representative of $K$.
\end{theorem}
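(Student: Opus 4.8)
The plan is to run the classical convex-surface argument. The first step is to reduce to the case $tb(\widetilde K)\le -1$. A Legendrian stabilization changes the pair $(tb(\widetilde K),rot_F(\widetilde K))$ by $(-1,+1)$ or $(-1,-1)$ according to its sign, and in either case does not alter the underlying smooth knot type, so $g(F)$ is unchanged. Choosing the sign of the stabilization to match the sign of $rot_F(\widetilde K)$, the quantity $tb+|rot_F|$ is left unchanged (or decreased) while $tb$ can be pushed down to $\le -1$. Since the statement is an upper bound on $tb+|rot_F|$, it therefore suffices to treat Legendrian representatives with $tb(\widetilde K)\le -1$.

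With $tb(\widetilde K)<0$ the twisting of $\xi$ along $\widetilde K$ relative to $F$ is negative, so by Kanda's criterion $F$ may be $C^{0}$-perturbed rel $\partial F$ to a convex surface; write $\Gamma\subset F$ for the dividing set and $F\setminus\Gamma=R_{+}\sqcup R_{-}$. I would then invoke the standard dictionary for a convex Seifert surface with Legendrian boundary: $tb(\widetilde K)=-\tfrac12\,\#(\widetilde K\cap\Gamma)$, so if $n$ denotes the number of arc components of $\Gamma$ (each meeting $\widetilde K$ in two endpoints) then $tb(\widetilde K)=-n$; and $rot_F(\widetilde K)=\chi(R_{+})-\chi(R_{-})$. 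Computing Euler characteristic through $F=R_{+}\cup_{\Gamma}R_{-}$, and noting that closed components of $\Gamma$ contribute $0$ and arcs contribute $1$ to $\chi(\Gamma)$, gives $\chi(R_{+})+\chi(R_{-})=\chi(F)+n$.

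Tightness enters through the Giroux criterion: $\xi$ tight implies its restriction to a neighborhood of $F$ is tight, hence $\Gamma$ has no homotopically trivial closed curve (and, when $F=D^{2}$, $\Gamma$ is a single boundary-parallel arc, which already forces $n=1$, $rot_F=0$, $tb=-1$). For $F\ne D^{2}$ this yields the key bound $\chi(R_{\pm})\le n$: each component of $R_{\pm}$ is a compact surface with non-empty boundary, hence has Euler characteristic $\le 1$ with equality exactly for disk components; the boundary of a disk component of $R_{+}$ cannot be a single closed curve of $\Gamma$ (that would be homotopically trivial) nor all of $\partial F$ (since $F$ is not a disk), so it contains at least one arc of $\Gamma$, and since an arc borders $R_{+}$ on only one side, distinct disk components use distinct arcs. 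Thus the number of disk components of $R_{+}$ is at most $n$ and all other components contribute nonpositively, so $\chi(R_{+})\le n$, and symmetrically $\chi(R_{-})\le n$. Combining $tb+|rot_F|=-n+|\chi(R_{+})-\chi(R_{-})|$ with $\chi(R_{+})+\chi(R_{-})=\chi(F)+n$ and, say, $\chi(R_{+})\ge\chi(R_{-})$ gives $tb+|rot_F|=2\chi(R_{+})-2n-\chi(F)\le -\chi(F)=2g(F)-1$.

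The one genuinely deep input is the Giroux criterion --- the portion of convex surface theory identifying tightness of a neighborhood with the absence of homotopically trivial dividing curves, which ultimately rests on Eliashberg's analysis; the stabilization reduction, Kanda's perturbation, and the Euler-characteristic bookkeeping are formal or elementary by comparison. I expect that to be the main obstacle, and it is precisely where the hypothesis that $\xi$ is tight is used: for an overtwisted $\xi$ the dividing set of a convex disk may contain homotopically trivial closed curves, exactly the configuration that the estimate $\chi(R_{\pm})\le n$ excludes, and the inequality genuinely fails there.
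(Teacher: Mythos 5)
The paper offers no proof of Theorem~\ref{thm:Eliashberg}: it is quoted as background and attributed to Eliashberg, so there is nothing internal to compare against. Your argument is the now-standard convex-surface-theory proof, and its main line is correct: the stabilization reduction to $tb(\tilde K)\le -1$ preserves $tb+|rot_F|$ and the smooth knot type; Kanda's criterion makes $F$ convex rel its Legendrian boundary; and the identities $tb=-n$, $rot_F=\chi(R_+)-\chi(R_-)$, $\chi(R_+)+\chi(R_-)=\chi(F)+n$, together with the Giroux-criterion bound $\chi(R_\pm)\le n$, assemble exactly as you say into $tb+|rot_F|\le-\chi(F)=2g(F)-1$. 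This is a genuinely different route from Eliashberg's original proof, which proceeds by manipulating the characteristic foliation of the Seifert surface directly (in the spirit of Bennequin's argument); the convex-surface route has the advantage of isolating all of the contact-geometric depth in a single citable input, the Giroux criterion, with everything else reduced to Euler-characteristic bookkeeping. One correction: your parenthetical handling of $F=D^2$ is wrong --- the Giroux criterion does not force the dividing set of a convex disk to be a single boundary-parallel arc (a Legendrian unknot with $tb=-2$ bounds a convex disk with two dividing arcs, and $tb=-1$ is certainly not forced). But the special case is also unnecessary: once $tb\le -1$ we have $\partial F\cap\Gamma\ne\emptyset$, so no disk component of $R_\pm$ can have its boundary contained entirely in $\partial F$ (its closure would have to contain an endpoint, hence an arc, of $\Gamma$), and your counting argument then applies verbatim when $F$ is a disk, giving $tb+|rot_F|\le -1$ there as well. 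With that parenthetical deleted, the proof is complete.
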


This is in stark contrast with overtwisted contact structures, where a given knot type has Legendrian representatives with arbitrarily large classical invariants. (Bennequin \cite{Bennequin} proved the above inequality for the standard tight contact structure on the three-sphere, $(S^3,\xi_{std})$, explaining the name.)  

Since the Eliashberg-Bennequin inequality, much work has been done to further constrain the classical invariants of Legendrian knots.  However, the work has primarily addressed the special case of Legendrian knots in $(S^3,\xi_{std})$.  The primary reason for the focus on knots in $(S,\xi_{std})$ is due to the fact that, for such knots, the classical invariants have a combinatorial description in terms of a particular type of projection of $K$ to $\R^2$, the {\em front projection}.  The combinatorics of such diagrams share some properties with various combinatorially defined knot invariants, e.g. the HOMFLY and Kauffman polynomials and Khovanov homology, and the best bounds for the classical invariants of knots in $(S^3,\xi_{std})$ come from these combinatorial knot invariants.  

For contact manifolds other than $(S^3,\xi_{std})$, much less is known about the classical invariants of Legendrian knots.  For Stein fillable contact structures, Eliashberg's bound was improved by Lisca and Matic \cite{LiscaMatic} (see also Akbulut and Matveyev \cite{Akbulut1}) and recently Mrowka and Rollin \cite{Mrowka} extended this to tight contact structures with non-vanishing Seiberg-Witten contact invariant.  An analogous theorem was proved for the \os \ contact invariant by Wu \cite{Wu}.  In both cases, the theorems replaced the genus of the Seifert surface by the genus of a surface properly embedded in a four-manifold bounded by the three-manifold.
It is important to note that aside from $(S^3,\xi_{std})$, all known bounds for the Thurston-Bennequin and rotation numbers of Legendrian knots involve $2g(F)-1=-\chi (F)$, for a surface-with-boundary, $F$, and hence are necessarily greater than or equal to $\sd 1$.  

The primary purpose of this paper is to introduce an integer-valued invariant $\tauc$ of a quadruple $(Y,\xi,[F],K)$ which will replace $g(F)$ in the Eliashberg-Bennequin inequality. Here $(Y,K)$ is a null-homologous knot, $[F]$ a homology class of Seifert surface, and $\xi$ a contact structure.  The precise definition of $\tauc$ will be given in the next section, but roughly speaking it uses the knot Floer homology filtration associated to $(Y,[F],K)$ together with the \os \ contact invariant, $\contact$.  In the case that $\contact\ne 0$ we will prove the following:

	\begin{theorem} \label{thm:tbbounds} 
	Let $(Y,\xi)$ be a contact three-manifold with non-trivial \os \ contact invariant.  Then for a null-homologous knot $K\hookrightarrow Y$ and Seifert surface, $F$ we have $$tb(\tilde{K})+|rot_F(\tilde{K})|\le 2\tauc-1,$$
	\noindent where $\tilde{K}$ is any Legendrian representative of $K$.
\end{theorem}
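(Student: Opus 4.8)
The strategy is to relate a Legendrian knot's classical invariants to the filtration level at which the contact class $c(\xi)$ survives. The key point is that the Ozsváth-Szabó contact invariant $c(\xi) \in \widehat{HF}(-Y)$ can be computed from an open book decomposition, and more to the point, when $\widetilde K$ is a Legendrian knot one can build an open book (or more directly, use the Legendrian surgery / knot-filtration relationship) in which the contact class is represented by a specific generator whose Alexander grading, relative to the knot filtration on $(Y,[F],K)$, is controlled by $tb(\widetilde K)$ and $rot_F(\widetilde K)$. Concretely, I would first recall that $\tau_\xi([F],K)$ is defined as the minimal filtration level $j$ such that the image of $\widehat{HF}(-Y)$ under the map induced by the inclusion $\mathcal F(Y,[F],K,j) \hookrightarrow \widehat{CF}(-Y)$ contains $c(\xi)$ (or the analogous formulation with the subquotient complexes). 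The theorem then amounts to showing that a Legendrian representative forces $c(\xi)$ to be supported in filtration level at least $\tfrac{1}{2}(tb(\widetilde K)+|rot_F(\widetilde K)|+1)$.

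**Key steps in order.** First I would set up the Legendrian-to-knot-filtration dictionary: stabilizing $\widetilde K$ if necessary, pass to an open book adapted to $(Y,\xi)$ in which $\widetilde K$ sits on a page, or equivalently use the fact (due to Ozsváth-Szabó, building on Plamenevskaya and the Honda-Kazez-Matić reformulation) that $c(\xi)$ is the image of a canonical generator under the natural map. Second, I would track how the Thurston-Bennequin and rotation numbers enter: the self-linking of the transverse push-off, or equivalently the combination $tb+rot$, measures precisely the shift in the $\Z$-filtration (the Alexander-type grading on $\widehat{CF}(-Y)$ coming from $(K,[F])$) between the generator carrying $c(\xi)$ and the top of the filtration. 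This is the heart of the matter and parallels how $\tau$ of a knot in $S^3$ bounds the slice-Bennequin number. Third, combine: since $c(\xi) \neq 0$, its filtration level is a well-defined integer, and by definition of $\tau_\xi([F],K)$ this level is at most $\tau_\xi([F],K)$; matching this against the geometric computation of the level in terms of $tb(\widetilde K)+|rot_F(\widetilde K)|$ gives $tb(\widetilde K)+|rot_F(\widetilde K)| \le 2\tau_\xi([F],K)-1$. Finally, since the left side is independent of which Legendrian representative we chose and the right side is a fixed invariant, we get the inequality for every $\widetilde K$.

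**Main obstacle.** The hardest part will be step two — making precise and rigorous the claim that the contact class, when pushed into the knot filtration of $(Y,[F],K)$, lands exactly at the filtration level predicted by $tb(\widetilde K)+|rot_F(\widetilde K)|$. In the $S^3$ case this is handled by Ozsváth-Szabó's computation of the knot Floer filtration under $(-1)$-surgery on a Legendrian knot, together with the behavior of $\tau$ under such surgeries; here one needs the analogous naturality statement for a general $Y$ with $c(\xi)\neq 0$, and one must be careful that the homology class $[F]$ of the Seifert surface is respected throughout (the filtration, and hence $\tau_\xi$, genuinely depends on $[F]$, so all the maps must be tracked in the correct $\mathrm{Spin}^c$ structure). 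A secondary technical point is ensuring the argument is insensitive to stabilization of $\widetilde K$: stabilizing changes $tb$ by $-1$ and $rot$ by $\pm 1$, so $tb+|rot|$ can only decrease, which is consistent with needing only an upper bound — but this must be stated cleanly so that the bound for the original $\widetilde K$ follows from the bound for a stabilized, open-book-compatible representative.
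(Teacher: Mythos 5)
Your proposal correctly identifies the main mechanism the paper uses: the Weinstein cobordism of Legendrian surgery along $\tilde{K}$, the naturality of $c(\xi)$ under the map induced by that cobordism in its canonical $\SpinC$ structure $\spinck$ (for which $\langle c_1(\spinck),[\widehat{F}]\rangle = rot_F(\tilde{K})$ and $[\widehat{F}]^2 = tb(\tilde{K})-1$), and the identification of the surgery cobordism maps with maps factoring through the knot filtration (after stabilizing $\tilde{K}$ so the framing is sufficiently negative). That is exactly the paper's route, and your remark that stabilization preserves $tb+rot$ while making the framing negative is the right technical fix. Your first suggested alternative --- placing $\tilde{K}$ on a page of an open book for $\xi$ and reading off the filtration level of the contact class directly --- is not viable as stated: a general Legendrian $K$ has no a priori relation to the fibered/open-book structure computing $c(\xi)$, and making that idea precise is a substantially harder problem than the theorem itself.

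The genuine gap is quantitative: the argument you sketch does not give $2\tau_{\xi}(K)-1$. Running the surgery argument, the relevant $\SpinC$ structure on the cobordism is labeled by $2m = \langle c_1(\spinck),[\widehat{F}]\rangle - n = rot_F(\tilde{K}) + tb(\tilde{K}) - 1$, and the nonvanishing of the dual cobordism map on every class pairing nontrivially with $c(\xi)$ forces only $m \le \tau_{\xi}(K)$, i.e.\ $tb(\tilde{K}) + rot_F(\tilde{K}) \le 2\tau_{\xi}(K) + 1$ --- off by $2$ from the claim. The paper closes this gap with a doubling trick you do not mention: apply the weak bound to $K\#K \hookrightarrow (Y\#Y,\xi\#\xi)$, using $tb(\tilde{K}\#\tilde{K}) = 2tb(\tilde{K})+1$, $rot(\tilde{K}\#\tilde{K}) = 2rot(\tilde{K})$, the product formula $c(\xi\#\xi) = c(\xi)\otimes c(\xi)$, and the additivity $\tau_{\xi\#\xi}(K\#K) = 2\tau_{\xi}(K)$ (Proposition \ref{prop:Additivity}), which improves the bound to $tb+rot \le 2\tau_{\xi}(K)$; the final $-1$ then comes from the parity observation that $tb+rot$ is always odd because $c_1(\spinck)$ is characteristic. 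Without these two extra inputs (both of which require the connected-sum properties of $\tau^*$ and of the contact invariant established earlier in the paper), your proof only proves a strictly weaker inequality. A small additional slip: you say the left-hand side is ``independent of which Legendrian representative we chose'' --- it is not; $tb+|rot|$ varies with the representative, and the point is that the inequality must be verified for each one (which the cobordism argument does), with the orientation-reversal symmetry of $rot$ handling the absolute value.
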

\begin{remark} Note that, in general, $rot_F(K)$ and $\tauc$ both depend on $[F]\in H_2(Y;\Z)$.  However, it will be shown that if $F$ and $F'$ are two Seifert surfaces then $$2\tau^F_{\xi}(K)-2\tau^{F'}_{\xi}(K)= \langle c_1(\xi), [F\!-\!F'] \rangle, $$ where $c_1(\xi)$ is the first Chern class of the contact structure or, equivalently, its Euler class.  On the other hand, it is easy to see that the rotation number depends on $[F]$ in the same way.
\end{remark}
	
By a theorem of \ons, the non-vanishing of the contact invariant implies tightness of $\xi$ and it will be immediate from the definition and an adjunction inequality that $\tauc\le g(F)$.
Thus the bounds obtained above will be at least as good as the Eliashberg-Bennequin bound. Indeed, in an upcoming paper we will show that the above bound is also as good as that provided by Wu \cite{Wu} (or Mrowka and Rollin).  Unlike $g(F)$, however, $\tauc$ can be negative and hence provides the first general method for determining prime knot types in contact manifolds other than $S^3$ whose classical Legendrian invariants are constrained to be negative.  Here, prime means that the only decomposition of $(Y,K)$  as a connect sum $(Y_1,K_1)\# (Y_2,K_2)$ is when one of the summands is $(S^3,\mathrm{unknot})$.  The primeness condition here is essential, since the combinatorial techniques described above can be adapted to the situation when we form the connected sum of a knot in $S^3$ and an unknot (i.e. a knot bounding a disk) in an arbitrary tight contact manifold.  Indeed, we will show that there exist prime knots in any contact manifold with $\contact\ne 0$ which have classical invariants constrained to be arbitrarily negative.  More precisely, we have

\begin{theorem} \label{thm:arbneg}
	Let $(Y,\xi)$ be a contact manifold with non-trivial \os \ contact invariant.  Then for any $N>0$, there exists a prime knot $K\hookrightarrow Y$ such that $$tb(\tilde{K})+|rot_F(\tilde{K})| \le -N,$$
	\noindent for any Legendrian representative, $\tilde{K}$, of $K$, and any Seifert surface, $F$.
\end{theorem}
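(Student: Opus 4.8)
The plan is to reduce, via Theorem~\ref{thm:tbbounds}, to the purely Floer-theoretic task of producing prime null-homologous knots whose invariant $\tauc$ is arbitrarily negative. Indeed, Theorem~\ref{thm:tbbounds} gives $tb(\tilde K)+|rot_F(\tilde K)|\le 2\tauc-1$ for every Legendrian representative $\tilde K$ and every Seifert surface $F$, so it suffices to exhibit, for each $N>0$, a prime null-homologous $K\hookrightarrow Y$ with $\tauc\le\tfrac12(1-N)$. Observe that connect sums do not help: although $\tauc$ is additive under connected sum, so that one could make it negative by connect-summing $(Y,K_0)$ with a knot of very negative $\tau$ in $(S^3,\xi_{std})$, such a knot is never prime in the required sense. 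One needs a genuinely knotted, primeness-preserving operation, and the natural candidate is cabling with a very negative framing.

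Fix, once and for all, a null-homologous knot $K_0\hookrightarrow Y$ whose $(2,-n)$-cable $C_{2,-n}(K_0)$ is prime for every sufficiently large odd $n$. When $Y=S^3$ one takes $K_0$ to be the unknot, whose $(2,-n)$-cable is the prime torus knot $T_{2,-n}$; when $Y\ne S^3$ one takes $K_0$ to be any null-homologous knot not contained in a ball --- for instance a null-homologous hyperbolic knot, which exists in every such $Y$ by work of Myers --- so that the companion solid torus is essential and the classical Schubert argument showing cable knots are prime applies verbatim in $Y$. For a large odd integer $n$ put $K_n:=C_{2,-n}(K_0)$. Then $[K_n]=2[K_0]=0$, so $K_n$ is null-homologous, and $K_n$ is prime by the choice of $K_0$.

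The crux is the estimate
$$\tau_\xi(K_n)\ \le\ 2\,\tau_\xi(K_0)+\tfrac12(1-n)\qquad\text{for all }n\gg 0,$$
which is the $\tauc$-analogue of the cabling formula $\tau\big(C_{p,q}(K)\big)=p\,\tau(K)+\tfrac12(p-1)(q+1)$ for $q$ sufficiently negative. Granting it, Theorem~\ref{thm:tbbounds} yields
$$tb(\tilde K_n)+|rot_F(\tilde K_n)|\ \le\ 2\tau_\xi(K_n)-1\ \le\ 4\,\tau_\xi(K_0)-n,$$
so a sufficiently large odd $n\ge N+4\,\tau_\xi(K_0)$ proves Theorem~\ref{thm:arbneg}. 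To establish the estimate one studies the knot Floer filtration of the cable. The conceptual point is that passing from $K_0$ to $K_n$ alters neither $Y$ nor $\xi$, so the contact class $\contact\in\HFa(-Y)$ is literally the same homology class; only the filtration in which its level is measured changes. As in the analysis of $\CFKa$ of cables of knots in $S^3$, one shows that with respect to a natural Seifert surface $F_n$ for $K_n$ (whose homology class is determined by that of a Seifert surface $F_0$ for $K_0$) the filtered complex $\CFKa(Y,K_n)$ is filtered chain homotopy equivalent to an explicit model built from $\CFKa(Y,K_0)$, in which the filtration level of $\contact$ is bounded above by $2\,\tau_\xi(K_0)+\tfrac12(1-n)$.

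The main obstacle is precisely this cabling estimate: tracking the distinguished class $\contact$ --- rather than a preferred generator of $\HFa$ --- through the filtered chain homotopy equivalences (or the surgery exact sequences) that compute knot Floer homology of a large negative cable, keeping careful account of how $\contact$ interacts with the relevant cobordism maps and of the attendant grading and filtration shifts. The $S^3$ version of this computation is the author's cabling theorem, and adapting it to carry the contact class along is where the real work lies. By comparison, the primeness of $K_n$ (Schubert), the existence of a non-local null-homologous companion (Myers), and the reduction through Theorem~\ref{thm:tbbounds} present no difficulty.
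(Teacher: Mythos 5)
Your proposal follows essentially the same route as the paper: reduce via Theorem~\ref{thm:tbbounds} to making $\tau_\xi$ arbitrarily negative, achieve this by taking cables with sufficiently negative framing, and defer the key filtration estimate for the cable to the cabling theorem of \cite{CableII} (the paper's version of your ``crux'' bounds $\tau_\xi(K_{p,-q})$ by $pg(K)-\tfrac{(p-1)(q-1)}{2}$, using $g(K_0)$ where you use $\tau_\xi(K_0)$, which suffices equally well). The only difference is that you explicitly verify primeness of the cables (Schubert, plus Myers for a non-local companion when $Y\ne S^3$), a point the paper leaves implicit.
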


The proof draws on results of \cite{CableII} which determine the behavior of the knot Floer homology filtration under a certain satellite operation called cabling.  In particular, negative upper bounds on $\tauc$ of sufficiently negative cables of any knot  can easily be achieved.  The precise statement of these bounds is described in Section \ref{sec:proofs}.

Another application of $\tauc$ involves contact structures induced by open book decompositions of a given three-manifold. 
 Recall that a fibered knot is a triple of data $(Y,F,K)$ consisting of a knot $(Y,K)$ and a surface $F$ with $\partial F=K$ for which we have the following identification:
 $$ Y\sd\nu(K) \cong \frac{F\times [0,1]}{\{(x,0)\simeq (\phi(x),1)\}}, $$
 \noindent where  $\phi$ is a diffeomorphism of $F$ fixing $\partial F$ and $\nu(K)$ is a neighborhood of $K$.  The decomposition of $Y\sd\nu(K)$ given above produces a decomposition of $Y$:
 $$ Y \cong \frac{F\times [0,1]}{\{(x,0)\simeq (\phi(x),1)\}} \cup D^2\times S^1,$$
 \noindent where we identify $\partial F\times \{p\}$ with $\{q \in \partial D^2\}\times S^1$ and $\{p'\in\partial F\}\times S^1$ with \newline $ \partial D^2\times\{q'\in S^1\}$.   Such a decomposition is called an {\em open book decomposition} of $Y$.  There is a well-known construction due to Thurston and Winkelnkemper \cite{Thurston} which associates a canonical contact structure on $Y$ to an open book decomposition.  In this way, we can associate a contact structure  to a fibered knot $(F,K)$.  Let $$\xi_{(F,K)}:=\mathrm{contact \ structure \ associated \ to \ the \ fibered \ knot\ } (F,K). $$  Given a contact manifold $(Y,\xi)$ and fibered knot $(F,K)$ one can ask whether there is a relationship between the classical invariants of $K$ in $\xi$ and the contact structure $\xi_{(F,K)}$.  The following theorem indicates that such a relationship exists, and provides a sufficient condition for $\xi_{(F,K)}$ to be tight in terms of the classical invariants of Legendrian representatives of $K$ in $\xi$.  
 
 \begin{theorem}\label{thm:fibered}	 Let $(Y,\xi)$ be a contact structure with non-trivial \os \ contact invariant.  Let $(F,K)$ be a fibered knot which realizes the Eliashberg-Bennequin bound in $\xi$.  That is, there exists a Legendrian representative, $\tilde{K}$, of $K$ such that:
	\begin{equation}\label{eq:EB}
	 tb(\tilde{K})+|rot_F(\tilde{K})|= 2g(F)-1.
 \end{equation}
 Then the contact structure associated to $(F,K)$ by the Thurston-Winkelnkemper construction, $\xi_{(F,K)}$,  is tight.  Furthermore, the \os \ contact invariants of $\xi_{(F,K)}$ and $\xi$ are identical. That is, $c(\xi_{(F,K)})=\contact$. 
 \end{theorem}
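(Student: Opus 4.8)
Since the hypothesis of the theorem already includes $\contact\neq 0$, once we know $c(\xi_{(F,K)})=\contact$ it follows that $c(\xi_{(F,K)})\neq 0$, whence $\xi_{(F,K)}$ is tight by the theorem of \ons{} recalled above (non-vanishing of the contact invariant forces tightness). So the plan is to prove only the equality of contact invariants; the tightness assertion then comes for free.

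The first step is to feed the hypothesis into Theorem \ref{thm:tbbounds}. Note that $[F]$ is the homology class of the fiber surface, which is genus-minimizing, so $g(F)$ is the Seifert genus of $K$. The assumed equality $tb(\tilde{K})+|rot_F(\tilde{K})|=2g(F)-1$ together with the bound $tb(\tilde{K})+|rot_F(\tilde{K})|\le 2\tauc-1$ forces $\tauc\ge g(F)$, while the adjunction inequality noted just after Theorem \ref{thm:tbbounds} forces $\tauc\le g(F)$. Hence $\tauc=g(F)$; that is, $\tauc$ attains its largest possible value relative to $[F]$.

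Next I would unwind what this maximality says about the class $\contact\in\HFa(-Y)$. By construction $\tauc$ records where $\contact$ sits relative to the knot filtration of $\HFa(-Y)$ determined by $(-Y,[F],K)$, and, via the long exact sequence relating the extremal level of that filtration to the whole complex, the condition $\tauc=g(F)$ translates into the assertion that $\contact$ lies in the image of the natural map $j_K\colon\HFKa(-Y,K,g(F))\to\HFa(-Y)$ induced by that extremal filtration level. Here fiberedness of $K$ is essential: by a theorem of \ons{}, the extremal knot Floer homology group of a fibered knot of genus $g(F)$ is one-dimensional, so $\HFKa(-Y,K,g(F))\cong\Field$, generated by a distinguished class $\Theta$; consequently $\Image(j_K)=\{0,j_K(\Theta)\}\subseteq\HFa(-Y)$ has exactly one nonzero element. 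The last ingredient is the \os{} computation of the Heegaard Floer contact invariant of the contact structure compatible with an open book: for the open book with page $F$ and binding $K$, the class $c(\xi_{(F,K)})\in\HFa(-Y)$ is precisely $j_K(\Theta)$. Combining these, $\contact$ is a nonzero element of $\{0,j_K(\Theta)\}=\{0,c(\xi_{(F,K)})\}$, so $\contact=c(\xi_{(F,K)})$, as claimed.

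The three constituent steps are each short; the one place I expect genuine subtlety is making the identifications in the third paragraph precise, and this is where the (co)homological care flagged in the introduction enters. One must arrange the definition of $\tauc$ so that ``$\tauc$ extremal'' is governed by an \emph{inclusion}-type map \emph{from} the one-dimensional extremal knot Floer group \emph{into} $\HFa(-Y)$, rather than by a quotient map \emph{onto} it: only the former lands $\contact$ in a two-element set, which is what yields the equality $\contact=c(\xi_{(F,K)})$ on the nose rather than merely modulo the lower filtration levels. One must also match the orientation and sign conventions — of $(-Y,[F],K)$, of the filtration used to define $\tauc$, and of the open-book monodromy $\phi$ — so that the extreme of the filtration detected by Theorem \ref{thm:tbbounds} is the very extreme at which the \os{} open-book formula records $c(\xi_{(F,K)})$. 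Once this bookkeeping is done the argument concludes exactly as above, and beyond the adjunction inequality already invoked, no four-dimensional or minimal-genus input is needed.
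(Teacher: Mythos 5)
Your proposal is correct and follows essentially the same route as the paper's proof: force $\tauc=g(F)$ from Theorem \ref{thm:tbbounds} plus the adjunction inequality, dualize (this is exactly Proposition \ref{prop:reversal}) to see that $\contact$ lies in the image of the inclusion of the extremal filtration level of $\CFa(-Y)$, and then use that fiberedness makes that level's homology $\Z/2\Z$ with the image of its generator being $c(\xi_{(F,K)})$ by definition. The bookkeeping you flag in your last paragraph is precisely what Propositions \ref{prop:knotduality} and \ref{prop:reversal} supply, so nothing further is needed.
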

 
 Note that as a special case of the above theorem we have that a fibered knot in $S^3$ with $TB(K)=2g(K)-1$ induces the standard tight contact structure (here $TB(K)$ is the maximal Thurston-Bennequin number over all Legendrian representatives of $K$.)  
We also have the immediate corollary
\begin{cor}
	Let $Y$ be a three manifold and $\xi_1,\ldots, \xi_i$ be contact structures with distinct non-trivial \os \ invariants. That is, $c(\xi_i)\ne c(\xi_j)$ unless $i=j$.  Then, given a fibered knot $(F,K)$, the equality
 $$tb(\tilde{K})+|rot_F(\tilde{K})|= 2g(F)-1$$
 \noindent can hold in at most one of $\xi_j$.
 \end{cor}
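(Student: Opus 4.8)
The plan is to deduce this immediately from Theorem \ref{thm:fibered}; the corollary is purely formal once that theorem is in hand. Suppose, for contradiction, that the Eliashberg--Bennequin equality
$$ tb(\tilde{K})+|rot_F(\tilde{K})|= 2g(F)-1 $$
is attained by some Legendrian representative $\tilde{K}$ of $K$ in two of the given contact structures, say $\xi_j$ and $\xi_k$ with $j\neq k$. By hypothesis each $\xi_m$ has non-vanishing \os \ contact invariant, so both $(Y,\xi_j)$ and $(Y,\xi_k)$ satisfy the hypotheses of Theorem \ref{thm:fibered} relative to the fibered knot $(F,K)$. Here it is worth recording that the rotation number in the displayed equality is computed with respect to the fiber surface $F$ itself, which is the canonical Seifert surface of the fibered knot, so there is no Seifert-surface ambiguity to worry about.

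Next I would apply Theorem \ref{thm:fibered} twice. Applying it with the contact structure $\xi_j$ yields $c(\xi_{(F,K)})=c(\xi_j)$, and applying it with $\xi_k$ yields $c(\xi_{(F,K)})=c(\xi_k)$. But $\xi_{(F,K)}$ is a single contact structure, obtained by running the Thurston--Winkelnkemper construction on the open book with page $F$ and monodromy $\phi$; in particular its \os \ contact invariant $c(\xi_{(F,K)})$ is one well-defined class, independent of which $\xi_m$ we started from. Therefore $c(\xi_j)=c(\xi_k)$, which contradicts the assumption that $c(\xi_1),\ldots,c(\xi_i)$ are pairwise distinct. Hence the equality can hold for at most one value of $j$.

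There is no real obstacle in this argument: all of the substance lies in Theorem \ref{thm:fibered}, and the corollary merely exploits that the contact structure associated to a fibered knot depends only on the open book data $(F,\phi)$ and not on the ambient contact structure $\xi$ used to detect the sharpness of the bound. The only point requiring (minor) care is the bookkeeping of the Seifert surface in the rotation number, which as noted above is forced to be the fiber $F$.
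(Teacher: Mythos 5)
Your proof is correct and takes essentially the same route as the paper, which presents this as an immediate consequence of Theorem \ref{thm:fibered}: applying that theorem in each contact structure where the equality holds forces $c(\xi_j)=c(\xi_{(F,K)})=c(\xi_k)$, contradicting distinctness. Your added remark that $\xi_{(F,K)}$ depends only on the open book data, not on the ambient $\xi$, is exactly the point that makes the deduction work.
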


 It is interesting to note that a fibered knot $(F,K)$ always has a Legendrian representative in $\xi_{(F,K)}$  realizing the Eliashberg-Bennequin bound \eqref{eq:EB}. Indeed, the construction of $\xi_{(F,K)}$ presents $K$ as a transverse knot satisfying $sl_F(K)=2g(F)-1$, and any Legendrian push-off of $K$ will satisfy \eqref{eq:EB}. (Here, $sl_F(K)$ is the self-linking number of the transverse knot, taken with respect to the fiber surface, $F$.  The stated equality follows from the observation that the characteristic foliation of $F$ in $\xi_{(F,K)}$ has no negative singularities.)
 Thus Theorem \ref{thm:fibered} and its corollary seem to indicate a surpising ``preference'' of fibered knot for its own contact structure, $\xi_{(F,K)}$, at least when we restrict attention to the subset of tight contact structures distinguished by their \os \ invariants.   It also leads to the following  
 
 \bigskip
 
\noindent  {\bf Question:} {\em It is known that there are tight contact structures with trivial \os \  invariant \cite{Ghiggini}.  However, one can ask if the conclusion of  Theorem $1.4$ holds if $(Y,\xi)$ is only assumed to be tight. That is,  does the existence of a Legendrian representative of a fibered knot $(F,K)$ in a tight contact structure $(Y,\xi)$  satisfying Equation \eqref{eq:EB} imply that $\xi_{(F,K)}$ is tight? If so, what is the relationship between $\xi_{(F,K)}$ and $\xi$?}

 \begin{remark}
  In another direction, we expect $\tauc$ to provide an obstruction to a knot $(Y,K)$ arising as the boundary of a properly embedded $J$-holomorphic curve in a symplectic filling of $(Y,K)$.  We will return to this point in an upcoming paper.  
  \end{remark}
\bigskip
  
\noindent {\bf Outline:} The organization of the paper is as follows. In the next section we spend a considerable amount of time setting up notation, reviewing basic properties of \os \ Floer homology for three-manifolds, its refinement for null-homologous knots, and the construction and properties of the contact invariant, $\contact$.  The main purpose of this section is to define several invariants associated to a knot in a three-manifold possessing non-vanishing Floer (co)homology classes.  The invariant $\tauc$ will be the special case of one of these invariants, when the Floer cohomology class is the contact invariant.  Section \ref{sec:Properties} establishes key properties of the invariants which generalize analogous properties of the \os \ concordance invariant.   Together with the results of \cite{CableII}, these properties will be used in Section \ref{sec:proofs} to prove the theorems. \bigskip

\noindent{\bf Acknowledgments:}   The original motivation for this work came from Olga Plamenev-skaya's paper \cite{Olga}, which established Theorem \ref{thm:tbbounds} for the  \os \ concordance invariant.  I have enjoyed and benefited from conversations with many people regarding the ideas presented here, among them John Etnyre, Paolo Ghiggini, Tom Mrowka, Peter Ozsv{\'a}th, Andr{\'a}s Stipsicz, and Hao Wu.  Special thanks go to Tim Perutz for pointing out an algebraic oversight in an earlier version of this work, and to Tom Mark for many useful comments and suggestions, and especially for his interest and help in dealing with the aformentioned oversight.

\section{Background on \os \ theory}
In this section we introduce and recall background on various aspects of the Floer homology package developed by \ons \ over the past several years.  All chain complexes will be over the field $\Z/2\Z$.  Due to the breadth of the theory, this section may not be sufficient for a complete understanding of the \os \ machinery, but we include it here to establish notation and recall the main results and structures of the theory which will be used. Much of the section can be skipped by the reader familiar with \os \ theory.  However, for such a reader, we call attention to Definitions \ref{defn:tau_x}, \ref{defn:tau_xdual},  and \ref{defn:tauc}.  These are the definitions of the invariants $\tau_{[x]}(Y,K)$, $\tau^*_{[y]}(Y,K)$, and  $\tauc$, respectively.  The idea behind each invariant is same as that of the \os \ concordance invariant or the Rasmussen $s$ invariant - a knot induces a filtration of a certain (co)chain complex and each invariant measures when the (co)homology of the subcomplexes in the filtration start to hit specific (co)homology classes.  The reason for multiple invariants is that in \os \ theory a knot induces a filtration on both the chain and cochain complexes associated to $Y$.  Moreover, the contact invariant $c(Y,\xi)$ is really an element of the Floer cohomology of $Y$, and hence we need an invariant, $\tau^*_{[y]}(Y,K)$, associated to a knot $K$ and a Floer cohomology class, $[y]$.  

Aside from these definitions, the only original material presented here is Property $4$ of the contact invariant, which is the behavior of the contact invariant under connected sums.  Though this property is expected and its proof straightforward, its appearance here is the first that we know of and may be of independent interest.  The rest of this section draws heavily on several articles of \ons \ \cite{FourBall,Contact,Knots}, and in some places we have simply adapted their work with notational changes - we stress that our purpose is to collect relevant results and establish notation. 

\subsection{The Knot Floer homology filtration}
\label{subsec:Filtration}
To a closed oriented three-manifold $Y$, equipped with a Spin$^c$ structure, $\spinc$, \ons \ defined several chain complexes, $\CFinf(Y,\spinc),\CFp(Y,\spinc),\CFm(Y,\spinc),\CFa(Y,\spinc)$ \cite{HolDisk}.  The homologies of these chain complexes, denoted $\HFinf(Y,\spinc),\HFp(Y,\spinc),\HFm(Y,\spinc),\HFa(Y,\spinc)$ were proved to be invariants of the pair $(Y,\spinc)$.   Associated to a null-homologous knot $K\hookrightarrow Y$, a choice of Seifert surface, $F$, and a Spin$^c$ structure, $\spinc$, they subsequently defined filtered versions of the above chain complexes, and proved that the filtered chain homotopy types of these chain complexes are invariants of the quadruple $(Y,[F],K,\spinc)$ (here $[F]\in H_2(Y\!-\!K;\Z)\cong H_2(Y;\Z)$ is the homology class of the Seifert surface).   We discuss the most general of these complexes, denoted $\CFKinf(Y,[F],K,\spinc)$.  Each of the other \os \ Floer chain complexes for knots and three-manifolds can be derived from this chain complex, and so we describe it first. We then discuss how to obtain some of the other invariants from it.   This approach is historically backwards, but our main purpose here it set up notation and collect  properties of the  chain complexes we use throughout the text. For a complete discussion we refer the interested reader to \cite{HolDisk,HolDiskTwo,Knots,Ras1}  and to \cite{Survey} for a survey. 

Fix a doubly-pointed (admissible) Heegaard diagram $(\Sigma_g,\alphas,\betas,w,z)$
for the knot $(Y,K)$ (see Definition $2.4$ of \cite{Knots} and Definition $4.10$ of \cite{HolDisk}) and consider the $g$-fold symmetric product  $\Sym^g(\Sigma_g)$, with two
 tori \begin{eqnarray*}
\Ta=\alpha_1\times\cdots\times\alpha_g&{\text{and}}&
\Tb=\beta_1\times\cdots\times\beta_g.
\end{eqnarray*} 
By an isotopy of the attaching curves, these tori intersect transversely in a finite number of points.  
\noindent In Section $2.3$ of \cite{Knots} \ons \ define a map $$\underline{\spinc}: \{\Ta\cap \Tb\} \rightarrow\mathrm{Spin}^c(Y_0(K))\simeq \mathrm{Spin}^c(Y)\times \Z,$$
\noindent which assigns to each intersection point  $\x\in \{\Ta\cap \Tb\}\subset \mathrm{Sym}^g(\Sigma)$ a Spin$^c$ structure 
on the zero-surgery of $Y$ along $K$, $Y_0(K)$. The projection from $\SpinC(Y_0(K))$ to $\SpinC(Y)$ is obtained by first restricting $\spinc$ to $Y\!-\!K$,
and then uniquely extending it to $Y$. Projection to the second factor comes from evaluation $\OneHalf
 \langle c_1(\spinc), [\widehat F]\rangle$, where $\widehat F$ denotes a surface in $Y_0(K)$ obtained by capping off a fixed Seifert surface, $F$, for $K$ with the meridian disk of the solid torus glued to $Y\!-\!K$ in the surgery. We say that a Spin$^c$ structure on $Y_0(K)$ {\em extends} $\spinc\in \mathrm{Spin}^c(Y)$ if projection onto the factor of Spin$^c(Y_0(K))$ corresponding to Spin$^c(Y)$ is equal to $\spinc$.

\begin{remark} More generally,  to an intersection point $\x$, the map $\underline{\spinc}$ assigns a relative $\SpinC$ structure $\underline{\spinc}(\x)$ on the knot complement. However, for null-homologous knots relative $\SpinC$ structures can be identified with $\SpinC$ structures on $Y_0(K)$.\end{remark}

 Fix  $\spinc\in \SpinC(Y)$ and a homology class of Seifert surface $[F]\in H_2(Y;\Z)$.  Now let $\spinc_0$ denote the unique $\SpinC$ structure in $\SpinC(Y_0(K))$ such that $\spinc_0$ extends $\spinc$ and satisfies $\OneHalf \langle c_1(\spinc_0),[\widehat F]\rangle=0$.  The chain complex $\CFKinf(Y,[F],K,\spinc)$ is then generated (as a $\Z/2\Z$ vector space) by  triples $[\x,i,j]$  satisfying the constraint 
\begin{equation}\label{eq:constraint}
\underline{\spinc}(\x) + (i-j)\mathrm{PD}(\mu)=\spinc_0.
\end{equation}
\noindent Here $\mathrm{PD}(\mu)\in H^2(Y_0(K);\Z)$ is the Poincar{\'e} dual to the meridian of $K$ and addition is meant to signify the action of $H^2(Y_0(K);\Z)$ on $\SpinC(Y_0(K))$. The constraint depends on the choice of Seifert surface but only through its homology class $[F]\in H_2(Y;\Z)$.  Indeed, this is the only place where the Seifert surface appears in the knot Floer homology construction.  Furthermore, Propositions \ref{prop:filtdiff} and \ref{prop:filtdiffhat} below show that the effect of varying $[F]$ can be easily understood in terms of the algebraic topology of $Y$.   Thus, when $[F]$ is clear from the context e.g. $Y$ is a rational homology sphere and $[F]=0$, or when it becomes notationally cumbersome, we will omit it from the discussion.

The boundary operator on $\CFKinf(Y,[F],K,\spinc)$ is defined by
 $$\partial
[\x,i,j] =
\sum_{\y\in\Ta\cap\Tb} \sum_{\{\phi\in\pi_2(\x,\y)\}}
\#\left(\frac{\ModFlow(\phi)}{\R}\right) [\y,i-n_\BasePt(\phi),j-n_\FiltPt(\phi)],$$ 
\noindent where $\#\left(\frac{\ModFlow(\phi)}{\R}\right)$ denotes a count, modulo $2$,  of points in the moduli space of unparameterized pseudo-holomorphic Whitney disks, $\phi$, with boundary conditions specified by $\x,\y$ and $\Ta$, $\Tb$.  The integers $n_\BasePt(\phi),n_\FiltPt(\phi)$ are intersection numbers between the image of $\phi$ in Sym$^g(\Sigma)$ with the codimension one subvarieties $\{\BasePt\}\times\mathrm{Sym}^{g-1}(\Sigma)$, $\{\FiltPt\}\times\mathrm{Sym}^{g-1}(\Sigma)$.  See Sections $2$ and $4$ of \cite{HolDisk} for relevant details and definitions regarding the boundary operator, and Section $3$ of \cite{HolDisk} for its analytical underpinnings.

If we define a partial ordering on $\Z\oplus \Z$ by the rule that $(i,j)\le (i',j')$ if $i\le i'$ and $j\le j'$, then  a {\em $\Z\oplus \Z$-filtered chain complex} is by definition a chain complex $C_*$ equipped with a map:
$$ \mathcal{F}: C_* \rightarrow \Z\oplus \Z,$$ 
\noindent such that the differential $\partial$ respects $\mathcal{F}$ in the sense that  $$\mathcal{F}(\partial(\x)) \le \mathcal{F}(\x) \ \mathrm{for\ every}\ \x\in C_*.$$

\noindent From its construction, it is immediate that $\CFKinf(Y,[F],K,\spinc)$ is a  $\Z\oplus \Z$-filtered chain complex - for a generator we simply define $\Filt([\x,i,j])=(i,j)$.  More generally, for a chain $c=\underset{k}{\Sigma} [\x_k,i_k,j_k]$,  the filtration is given by $\Filt(c)=(\underset{k}{\mathrm{max}}\ i_k,\underset{k}{\mathrm{max}}\ j_k)$.  

Now the Whitney disks counted in  $\#\left(\frac{\ModFlow(\phi)}{\R}\right)$ have pseudo-holomorphic representatives, and hence the quantities $n_\BasePt(\phi)$ and $n_\FiltPt(\phi)$ are necessarily positive - indeed the submanifolds  $\{\BasePt\}\times\mathrm{Sym}^{g-1}(\Sigma)$, $\{\FiltPt\}\times\mathrm{Sym}^{g-1}(\Sigma)$ are pseudo-holomorphic and thus intersect the image of pseudo-holomorphic Whitney disks positively (see Lemma $3.2$ of \cite{HolDisk}). Hence $\Filt$ equips $\CFKinf(Y,[F],K,\spinc)$ with a $\Z\oplus\Z$-filtration.  Theorem $3.1$ of \cite{Knots} proved that the $\Z\oplus\Z$-filtered chain homotopy type of $\CFKinf(Y,[F],K,\spinc)$ is an invariant of the quadruple $(Y,[F],K,\spinc)$.  Indeed this is the primary knot invariant in \os \ theory and is quite powerful - it has been shown that the filtered chain homotopy type of $\CFKinf(Y,[F],K,\spinc)$ determines the genus of $K$ \cite{GenusBounds}, whether $K$ is fibered \cite{Ghiggini,Ni1, Juhasz}, can be used to determine the Floer homology of three-manifolds obtained by surgery along $(Y,K)$ \cite{IntegerSurgeries,RationalSurgeries}, and has applications to determining the smooth four-genera of knots in $S^3$ \cite{FourBall}.   Note that $\Filt$ depends on $[F]$ through Equation \eqref{eq:constraint}, but only up to an overall shift which we now make precise.

\begin{prop}\label{prop:filtdiff}
	Let $F,F'$ be two Seifert surfaces for a knot $K\hookrightarrow Y$.   Fix $\x\in \Ta\cap\Tb$ and let $[\x,i,j]\in\CFKinf(Y,[F],K,\spinc)$ and $[\x,i',j']\in\CFKinf(Y,[F'],K,\spinc)$ be generators.  Then we have the relation:
	\begin{equation}\label{eq:filtdiff} (i-i')+(j'-j)= -\OneHalf \langle c_1(\spinc),[F\!-\!F'] \rangle, \end{equation}
\noindent	where 	$[F\!-\!F']\in H_2(Y;\Z)$ is the difference of the homology classes of $F$ and $F'$, and $c_1(\spinc)$ is the first Chern class of the $\SpinC$ structure, $\spinc$.
\end{prop}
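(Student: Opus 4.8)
The plan is to analyze how the defining constraint \eqref{eq:constraint} changes when one replaces the Seifert surface $F$ by $F'$, since this constraint is the only place where $[F]$ enters the construction of $\CFKinf$. The key observation is that the two capped-off surfaces $\widehat F, \widehat F' \subset Y_0(K)$ differ, as homology classes in $H_2(Y_0(K);\Z)$, by exactly $[F-F']\in H_2(Y;\Z)\hookrightarrow H_2(Y_0(K);\Z)$ (the meridian disk caps are identical). Consequently the two $\SpinC$ structures $\spinc_0$ and $\spinc_0'$ on $Y_0(K)$ used to set up the respective complexes — each characterized by extending $\spinc$ and pairing trivially with its own capped surface — are in general \emph{different}, and I would first compute that difference.

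First I would recall that a $\SpinC$ structure on $Y_0(K)$ is determined by its restriction to $Y-K$ together with the evaluation $\OneHalf\langle c_1(-),[\widehat F]\rangle\in\Z$; since both $\spinc_0$ and $\spinc_0'$ restrict to $\spinc$ on $Y-K$, they differ by a multiple of $\PD(\mu)$. To pin down that multiple, evaluate $c_1$ of both against, say, $[\widehat F]$: by construction $\OneHalf\langle c_1(\spinc_0),[\widehat F]\rangle=0$, while $\OneHalf\langle c_1(\spinc_0'),[\widehat F]\rangle = \OneHalf\langle c_1(\spinc_0'),[\widehat F']\rangle + \OneHalf\langle c_1(\spinc_0'),[\widehat F-\widehat F']\rangle = 0 + \OneHalf\langle c_1(\spinc),[F-F']\rangle$, using that $c_1(\spinc_0')$ restricted to $Y-K$ (hence evaluated on the class $[F-F']$ supported away from $K$) agrees with $c_1(\spinc)$. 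Combining, $\spinc_0' - \spinc_0 = \OneHalf\langle c_1(\spinc),[F-F']\rangle\cdot\PD(\mu)$, where I use that $\langle c_1(\mathfrak t + \PD(\mu)),[\widehat F]\rangle - \langle c_1(\mathfrak t),[\widehat F]\rangle = 2$ (the standard computation $[\mu]\cdot[\widehat F]=1$ in $Y_0(K)$).

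Next, for a fixed intersection point $\x$, the triples $[\x,i,j]$ and $[\x,i',j']$ lie in $\CFKinf(Y,[F],K,\spinc)$ and $\CFKinf(Y,[F'],K,\spinc)$ respectively precisely when
$$\sRelSpinC(\x) + (i-j)\PD(\mu) = \spinc_0, \qquad \sRelSpinC(\x) + (i'-j')\PD(\mu) = \spinc_0'.$$
Subtracting and using the computation of $\spinc_0'-\spinc_0$ gives $(i'-j') - (i-j) = \OneHalf\langle c_1(\spinc),[F-F']\rangle$, i.e.
$$(i-i') + (j'-j) = -\OneHalf\langle c_1(\spinc),[F-F']\rangle,$$
which is exactly \eqref{eq:filtdiff}. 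The main obstacle — really the only delicate point — is making precise the identification $\SpinC(Y_0(K))\cong\SpinC(Y)\times\Z$ and the behavior of $c_1$ and the $\PD(\mu)$-action under it, i.e. checking that the intersection number $[\mu]\cdot[\widehat F]=1$ and that restriction-to-$Y-K$ intertwines $c_1$ with the evaluation against classes pushed in from $H_2(Y;\Z)$; once that bookkeeping is in place the proposition is immediate. I would cite Section $2.3$ (and in particular the definition of the map $\sRelSpinC$) of \cite{Knots} for these facts rather than reproving them.
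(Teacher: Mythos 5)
Your proposal is correct and follows essentially the same route as the paper: identify that $[F]$ enters only through the choice of $\spinc_0$, compute $\spinc_0'-\spinc_0=\OneHalf\langle c_1(\spinc),[F\!-\!F']\rangle\cdot\PD(\mu)$, and subtract the two instances of the constraint \eqref{eq:constraint}. The only difference is that you explicitly justify the formula for $\spinc_0'-\spinc_0$ by evaluating against $[\widehat F]$, a step the paper simply asserts, so your write-up is if anything slightly more complete.
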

\begin{proof}
	Equation \eqref{eq:constraint} determines the triples which generate $\CFKinf(Y,[F],K,\spinc)$ and $\CFKinf(Y,[F'],K,\spinc)$.  The place in this equation where $[F]$ places a role is in the choice of $\spinc_0\in \SpinC(Y_0(K))$ extending $\spinc\in \SpinC(Y)$.   Now $F$ and  $F'$ yield $\SpinC$ structures $\spinc_0$ and $\spinc_0'$, respectively, which extend $\spinc$ and satisfy \begin{eqnarray*}
 \langle c_1(\spinc_0),[\widehat F]\rangle=0 &{\text{resp.}}&
 \langle c_1(\spinc_0'),[\widehat F']\rangle=0.
\end{eqnarray*} 
\noindent In order for both these equalities to hold,  $\spinc_0$ and $\spinc_0'$ are forced to be related by  $$\spinc_0'-\spinc_0=\OneHalf \langle c_1(\spinc),[F\!-\!F'] \rangle\cm \mathrm{PD}(\mu)\in H^2(Y_0(K);\Z).$$ Now \eqref{eq:constraint} requires that
$$\underline{\spinc}(\x) + (i-j)\mathrm{PD}(\mu)=\spinc_0$$
$$\underline{\spinc}(\x) + (i'-j')\mathrm{PD}(\mu)=\spinc_0',$$
\noindent for the respective choice of Seifert surfaces.  Subtracting the second second equation from the first yields the desired relation.

\end{proof} 

Much of the power of the filtered chain homotopy type of $\CFKinf(Y,[F],K,\spinc)$  lies in our ability to construct new topological invariants by restricting attention to subsets  $C_\spinc\subset\CFKinf(Y,[F],K,\spinc)$ whose $\Filt$-values  satisfy various numerical constraints.  If the differential on  $\CFKinf$ restricts to a differential on the chosen subset (i.e. $(\partial|_{C_\spinc})^2=0$) then the homology of $C_\spinc$ with respect to the restricted differential will be an invariant of $(Y,[F],K,\spinc)$.  For instance, we can examine the set $$\Ci{=0} \subset \CFKinf(Y,[F],K,\spinc),$$
\noindent consisting of generators of the form $[\x,0,j]$ for some $j\in\Z$. This set naturally inherits a differential from $\CFKinf(Y,[F],K,\spinc)$, since it is a subcomplex of the quotient complex $\frac{\CFKinf}{ \Ci{<0}}$.  We have the isomorphism of chain complexes $$\Ci{=0} \cong \CFa(Y,\spinc),$$
\noindent (which the uninitiated reader can take as the definition of $\CFa(Y,\spinc)$).  Thus we recover the ``hat'' Floer homology of $(Y,\spinc)$ from $\CFKinf(Y,[F],K,\spinc)$.  Furthermore, by restricting $\Filt$ to $\Ci{=0}$ we equip $\CFa(Y,\spinc)$ with a $\Z$-filtration.  In particular, if we denote by $\filtYF{m}$ the subcomplex of $\CFa(Y,\spinc)$:
$$\filtYF{m} := \Cij{=0}{\le m},$$

\noindent then we have the finite sequence of inclusions:
$$0=\filtYF{\sd j}\hookrightarrow \filtYF{\sd j+1}\hookrightarrow \ldots \hookrightarrow \filtYF{n}=\Ci{=0}.$$
\noindent (Finiteness of the above sequence follows from the fact the number of intersection points $\x\in \{\Ta\cap\Tb\}$ is finite.)  Proposition \ref{prop:filtdiff} indicates that the dependence of this filtration on  $[F]$ is given by: 

\begin{prop}\label{prop:filtdiffhat}	Let $F,F'$ be two Seifert surfaces for a knot $K\hookrightarrow Y$.  Let $\Filt$ and $\Filt'$ denote the resulting filtrations of $\CFa(Y,\spinc)$ induced by $([F],K)$ and $([F'],K)$, respectively.  Then, for fixed $\x\in \CFa(Y,\spinc)$, we have:
$$ \Filt'(\x) = \Filt(\x) -\OneHalf \langle c_1(\spinc),[F\!-\!F'] \rangle.$$
	\end{prop}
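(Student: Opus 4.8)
The plan is to deduce Proposition \ref{prop:filtdiffhat} directly from Proposition \ref{prop:filtdiff} by tracking how the $j$-coordinate of a generator translates into the single filtration level on $\CFa(Y,\spinc)$. Recall that $\CFa(Y,\spinc)$ was identified with the subcomplex $\Ci{=0}\subset\CFKinf(Y,[F],K,\spinc)$ consisting of generators $[\x,0,j]$, and that for such a generator the induced $\Z$-filtration level is precisely $\Filt([\x,0,j])=j$. So the content of the proposition is simply that, for a fixed intersection point $\x$, the value of $j$ for which $[\x,0,j]$ lies in $\CFKinf(Y,[F],K,\spinc)$ differs from the value $j'$ for which $[\x,0,j']$ lies in $\CFKinf(Y,[F'],K,\spinc)$ by exactly $-\OneHalf\langle c_1(\spinc),[F\!-\!F']\rangle$.

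First I would specialize the relation \eqref{eq:filtdiff} of Proposition \ref{prop:filtdiff} to the case $i=i'=0$. With this substitution the left-hand side $(i-i')+(j'-j)$ collapses to $j'-j$, so the proposition gives immediately
\begin{equation*}
 j'-j = -\OneHalf\langle c_1(\spinc),[F\!-\!F']\rangle.
\end{equation*}
Next I would note that a chain-level generator $\x$ of $\CFa(Y,\spinc)\cong\Ci{=0}$ is represented, with respect to the Seifert surface $F$, by the triple $[\x,0,j]$ with $j$ the unique integer satisfying the constraint \eqref{eq:constraint}, so $\Filt(\x)=j$; and with respect to $F'$ it is represented by $[\x,0,j']$, so $\Filt'(\x)=j'$. (Here I am using that for each intersection point $\x$ there is a \emph{unique} such $j$, since $\PD(\mu)$ is nontorsion in $H^2(Y_0(K);\Z)$ — this is implicit in the identification $\Ci{=0}\cong\CFa(Y,\spinc)$ already established above.) Combining these two observations yields $\Filt'(\x)-\Filt(\x)=j'-j=-\OneHalf\langle c_1(\spinc),[F\!-\!F']\rangle$, which is the claim.

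Since the statement is linear in generators and a general element of $\CFa(Y,\spinc)$ is a sum of generators, it suffices to verify the identity on generators, which is what the above does; the filtration of a sum is the max of the filtrations of the summands, and both $\Filt$ and $\Filt'$ are shifted by the same constant, so the relation persists for arbitrary chains. The only point requiring any care — and the closest thing to an ``obstacle,'' though it is quite mild — is the bookkeeping that the single filtration level on $\CFa$ really is the $j$-coordinate and that this coordinate is well-defined (unique) for each $\x$ once $[F]$ and $\spinc$ are fixed; both facts have already been set up in the discussion preceding the proposition, so the proof is essentially a one-line corollary of Proposition \ref{prop:filtdiff}.
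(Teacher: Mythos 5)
Your proposal is correct and follows essentially the same route as the paper: specialize Equation \eqref{eq:filtdiff} to $i=i'=0$, identify $\Filt(\x)=j$ and $\Filt'(\x)=j'$ under $\Ci{=0}\cong\CFa(Y,\spinc)$, and read off $j'-j=-\OneHalf\langle c_1(\spinc),[F\!-\!F']\rangle$. The extra remarks about uniqueness of $j$ and extension to general chains are harmless elaborations of what the paper leaves implicit.
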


	\begin{proof} The relation follows immediately from Equation \eqref{eq:filtdiff}.  Specifically, a generator  $\x\in\CFa(Y,\spinc)$ corresponds either to a triple $[x,0,j]\in \CFKinf(Y,[F],K,\spinc)$ or to a triple $[\x,0,j'] \in \CFKinf(Y,[F'],K,\spinc)$. In terms of these triples, $\Filt(\x)=j$ and $\Filt'(\x)=j'$.  Equation \eqref{eq:filtdiff} now shows that $j'-j=    -\OneHalf \langle c_1(\spinc),[F\!-\!F'] \rangle,$ proving the proposition.\end{proof}

Some particularly interesting invariants derived from the filtration of $\CFa(Y,\spinc)$ are the homology groups of the successive quotients (the associated graded homology groups), $H_*(\frac{\filtYF{m}}{\filtYF{m\sd1}})$ which we denote by $\HFKa_\spinc(Y,[F],K,m)$.  These are the so-called ``knot Floer homology groups'' of $(Y,[F],K,\spinc)$.  For the case of knots in the three-sphere, the weighted Euler characteristic of these groups is the classical Alexander-Conway polynomial, $\Delta_K(T)$ of the knot (see \cite{Knots,Ras1}):
$$\sum_{m} \chi \left(\HFKa(S^3,K,m)\right) \cm T^m =\Delta_K(T).$$

 \noindent In terms of the above subcomplexes, we can define a numerical invariant of a knot in a three-manifold with non-zero Floer homology class $[x]\ne 0\in\HFa(Y,\spinc)$ as follows.  Let $I_m$ denote the map on homology induced by the inclusion:
$$ \iota_m: \filtYF{m}\hookrightarrow \CFa(Y,\spinc).$$

\noindent Then given $[x]\ne 0\in \HFa(Y,\spinc)$ we have the following integer associated to $(Y,[F],K,\spinc)$:
\begin{defn}
	\label{defn:tau_x}
$$	\tau_{[x]}(Y,[F],K)=\mathrm{min}\{m\in\Z|\ [x]\subset \mathrm{Im} \ I_m\}. $$\end{defn}
\begin{remark} \label{remark:dependence} Note that our notation suppresses $\spinc\in \SpinC(Y)$. In light of Proposition \ref{prop:filtdiffhat}, the dependence on $[F]$ is given by:
	$$\tau_{[x]}(Y,[F],K)-\tau_{[x]}(Y,[F'],K)= 	\OneHalf \langle c_1(\spinc),[F\!-\!F'] \rangle.$$
\end{remark}
\noindent  It follows immediately from the fact that the $\Z\oplus\Z$ filtered chain homotopy type of $\CFKinf$ is an invariant of $(Y,[F],K,\spinc)$, that $\tau_{[x]}(Y,[F],K)$ is also an invariant of  $(Y,[F],K,\spinc)$.  This paper will focus on the case when $[x]$ is the \os \ contact invariant $c(\xi)\in \HFa(-Y)$, described in Subsection \ref{subsec:contact} below.  Since $c(\xi)$ is an element of the Floer homology of the three-manifold with reversed orientation, $\HFa_*(-Y)$, and this group can be identified with the Floer cohomology, $\HFa^*(Y)$, it will be useful to be able to ``dualize''  $\tau_{[x]}(Y,[F],K)$ in an appropriate sense.  For these purposes, we digress to discuss the precise behavior of (knot) Floer homology under orientation reversal of the underlying three-manifold.

\subsection{Orientation Reversal of Y}

We begin by  recalling the following proposition:
\begin{prop}\label{prop:Duality}(Proposition $2.5$ of \cite{HolDiskTwo}) Let $Y$ be an oriented three-manifold equipped with a Spin$^c$ structure, $\spinc$, and let $-Y$ denote the manifold with reversed orientation, then we have a natural chain homotopy equivalence:
	$$ \CFa^*(Y,\spinc):=(\mathrm{Hom}(\CFa(Y,\spinc),\Z/2\Z),\delta)\cong \CFa_*(-Y,\spinc) $$
\end{prop}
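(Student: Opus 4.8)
The statement to be proven is Proposition~\ref{prop:Duality}: for an oriented three-manifold $Y$ with $\SpinC$ structure $\spinc$, there is a natural chain homotopy equivalence $\CFa^*(Y,\spinc) := (\mathrm{Hom}(\CFa(Y,\spinc),\Z/2\Z),\delta) \cong \CFa_*(-Y,\spinc)$. This is Proposition~$2.5$ of \cite{HolDiskTwo}, so the plan is to recall the proof at the chain level. The basic mechanism is that a Heegaard diagram determines a Heegaard diagram for the reversed-orientation manifold by switching the roles of the $\alpha$ and $\beta$ curves (equivalently, reversing the orientation of the Heegaard surface).

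First I would fix a pointed admissible Heegaard diagram $(\Sigma,\alphas,\betas,w)$ for $(Y,\spinc)$ and observe that $(-\Sigma,\alphas,\betas,w)$, or equivalently $(\Sigma,\betas,\alphas,w)$, is a Heegaard diagram for $(-Y,\spinc)$; the identification of intersection points $\Ta\cap\Tb$ is canonical (unordered $g$-tuples of points), and one checks that the induced map on $\SpinC$ structures is the identity under the natural identification $\SpinC(Y)\cong\SpinC(-Y)$. Next I would compare the differentials: a Whitney disk $\phi\in\pi_2(\x,\y)$ in $\Sym^g(\Sigma)$ connecting $\x$ to $\y$ with respect to $(\alphas,\betas)$ corresponds, after precomposing with the orientation-reversing involution of the strip $\Strip$ (swapping the two boundary arcs and reversing time), to a Whitney disk $\overline\phi\in\pi_2(\y,\x)$ with respect to $(\betas,\alphas)$, with the same Maslov index and the same $n_w$; moreover a complex structure $J$ on $\Sym^g(\Sigma)$ pulls back under $-\Sigma$ to one for which the moduli spaces $\ModFlow(\phi)$ and $\ModFlow(\overline\phi)$ are identified. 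Consequently the matrix of the differential on $\CFa(-Y,\spinc)$ is the transpose of the matrix of the differential on $\CFa(Y,\spinc)$. Since the dual complex $\mathrm{Hom}(\CFa(Y,\spinc),\Z/2\Z)$ with codifferential $\delta$ is by definition the complex whose differential matrix is that transpose (over $\Z/2\Z$ the dual basis is canonical), this exhibits the desired isomorphism of chain complexes, hence a fortiori a chain homotopy equivalence; naturality in the choice of diagram follows because the isomorphisms relating two admissible diagrams for $Y$ and the corresponding diagrams for $-Y$ are themselves transposes.

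The main obstacle here is bookkeeping rather than substance: one must verify carefully that the orientation-reversal of the strip together with the $\alpha$-$\beta$ swap is compatible with the chosen coherent orientations of moduli spaces and, more importantly for us working over $\Z/2\Z$, with the admissibility hypothesis (an admissible diagram for $Y$ remains admissible for $-Y$ after the swap, since periodic domains are unchanged as subsets of $\Sigma$ and their signed multiplicities merely change sign). Since we work over $\F = \Z/2\Z$ throughout, sign issues evaporate and the transpose relation is immediate once the moduli-space identification is in place. I would then remark that the same argument, carried out with the second basepoint $z$ retained and tracking both $n_w$ and $n_z$, gives the filtered refinement needed later, but for the present proposition only the unfiltered statement is required, and it follows as above by citing \cite{HolDiskTwo}.
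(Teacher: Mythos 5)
Your proof is correct and follows essentially the same route as the paper's: switching the roles of the $\alpha$- and $\beta$-curves (equivalently reversing the orientation of $\Sigma$), identifying $\Ta\cap\Tb$ canonically, reversing the direction of Whitney disks while preserving $n_w$, and concluding that the differential matrix is transposed, which is exactly the dual differential over $\Z/2\Z$. Your additional remarks on admissibility and on the filtered refinement are accurate and consistent with the paper's setup, so no further changes are needed.
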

\begin{remark} The term on the left is the dual complex associated to the chain complex $\CFa(Y,\spinc)$, hence the Floer homology of $-Y$ is isomorphic to the Floer cohomology of $Y$. Throughout, we will denote dual complexes with an upper star and, like our chain complexes, these will always be with $\Z/2\Z$ coefficients so to avoid dealing with $Ext$ terms (note that at the time of writing, the author knows of no examples of knots or three-manifolds with torsion in the ``hat'' versions of \os \ Floer homology, though torsion has been found in the $\pm,\infty$ varieties \cite{Mark}). 
\end{remark}
\begin{proof}  If we fix a Heegaard diagram $(\Sigma,\alphas,\betas,w)$ for $Y$,  a Heegaard diagram for $-Y$ is obtained by either reversing the orientation of the Heegaard surface,  $(-\Sigma,\alphas,\betas,w)$, or switching the roles of the $\alpha$ and $\beta$ curves,  $(\Sigma,\betas,\alphas,w)$.  In either case there is an identification of intersection points $ \x\in \Ta\cap\Tb\subset \mathrm{Sym}^g(-\Sigma)$  (respectively  $ \x\in \Tb\cap\Ta\subset \mathrm{Sym}^g(\Sigma)$) with those in $\Ta\cap \Tb\subset \mathrm{Sym}^g(\Sigma)$.   Moreover, upon switching $\alphas$ and $\betas$, $J_s$-holomorphic Whitney disks in Sym$^g(\Sigma)$ connecting $\x$ to $\y$ for the chain complex coming from  $(\Sigma,\alphas,\betas,w)$ are identified with $J_s$-holomorphic Whitney disks in Sym$^g(\Sigma)$ connecting $\y$ to $\x$ in the chain complex for $(\Sigma,\betas,\alphas,w)$.   This yields the identification of the proposition.  For the case where the orientation of $\Sigma$ is reversed, we can alternatively prove the proposition as follows:  Fix $\phi\in\pi_2(\x,\y)$, and let  $\overline{\phi}\in\pi_2(\y,\x)$ denote the homotopy class of the disk in Sym$^g(-\Sigma)$ obtained from $\phi$ by pre-composing with complex conjugation in $\C$.  Then there is an identification of moduli spaces	$$\ModFlow_{\overline{J}_s}(\overline{\phi})\cong \ModFlow_{J_s}(\phi),$$
	\noindent where $\overline{J}_s$ denotes the almost complex structure on Sym$^g(-\Sigma)$ obtained from $J_s$ by conjugation.  This identification of moduli spaces provides an alternative proof of the proposition.  Note that since conjugation takes place in both $\C$ and Sym$^g(-\Sigma)$, intersection numbers are unaffected, i.e.  $n_z(\phi)=n_z(\overline{\phi})$.
\end{proof}

We will also have need for the behavior of the knot filtration $\Filt(Y,[F],K)$ under orientation reversal of $Y$.  For the present paper, the following proposition will be sufficient:
\begin{prop}\label{prop:knotduality}(compare Proposition $3.7$ of \cite{Knots}) 
Consider the short exact sequence of chain complexes for $\Filt(Y,K,m)$:
$$
\begin{CD}
	0 @>>> \Filt(Y,K,m) @>{\iota_m}>> \CFa(Y,\spinc) @>{p_m}>> Q_\spinc(Y,K,m) @>>> 0.
\end{CD}
$$
There is a natural identification:
\begin{equation}
	\label{eq:knotduality}
\begin{CD}
	0 \leftarrow \Filt^*(Y,K,m) @<{\iota_m^*}<< \CFa^*(Y,\spinc) @<{p_m^*}<< Q_\spinc^*(Y,K,m) \leftarrow 0 \\
  @V{\cong}VV @V{\cong}VV @V{\cong}VV  \\ 
 	0\leftarrow  Q_\spinc(\sd Y,K,\sd m\sd 1) @<{\sd p_{\sd m\sd 1}}<< \CFa(\sd Y,\spinc) @<{\sd \iota_{\sd m\sd 1}}<< \Filt(\sd Y,K,\sd m\sd 1) \leftarrow 0 \\
\end{CD}
\end{equation}
\noindent where the top row is the dual of the first short exact sequence and the bottom is the short exact sequence corresponding to $\Filt(-Y,K,-m-1)$
\end{prop}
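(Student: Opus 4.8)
The plan is to build the diagram \eqref{eq:knotduality} from the analogous statement at the chain level, exactly as in the proof of Proposition~\ref{prop:Duality} for $\CFa$ itself, and then keep track of how the filtration index behaves. First I would fix a doubly-pointed admissible Heegaard diagram $(\Sigma,\alphas,\betas,w,z)$ for $(Y,K)$ and use the diagram $(\Sigma,\betas,\alphas,w,z)$ for $-Y$ (switching the roles of the $\alpha$ and $\beta$ curves, as in Proposition~\ref{prop:Duality}). Under the resulting identification of intersection points, a $J_s$-holomorphic Whitney disk $\phi\in\pi_2(\x,\y)$ for the first diagram corresponds to a $J_s$-holomorphic disk $\overline\phi\in\pi_2(\y,\x)$ for the second, and $n_w(\overline\phi)=n_w(\phi)$, $n_z(\overline\phi)=n_z(\phi)$. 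The only subtlety beyond Proposition~\ref{prop:Duality} is the effect on the $\Z$-filtration $\Filt$: the relative Spin$^c$ map $\sRelSpinC$ and the evaluation against $[\widehat F]$ change sign when one reverses the orientation of $Y$, so that the generator of $\CFa(-Y,\spinc)$ corresponding to $\x$ sits in filtration level $-s-?$ rather than $s$ for an appropriate normalization. Tracking the base point $w$ versus the filtration point $z$ carefully (this is the content of the shift by $-m-1$ as opposed to $-m$) is the step I expect to be the main obstacle; it is essentially the computation behind Proposition~$3.7$ of \cite{Knots}, reproduced here with the conventions of this paper.

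Concretely, the steps are: (1) record the chain-level identification $\CFa^*(Y,\spinc)\cong\CFa(-Y,\spinc)$ of Proposition~\ref{prop:Duality}, which sends the dual generator $\x^*$ to the generator $\x$ and the coboundary $\delta$ to the boundary $\del$ on $-Y$; (2) observe that, since $\Filt(Y,K,m)\subset\CFa(Y,\spinc)$ is the span of generators $\x$ with $\Filt(\x)\le m$, its annihilator inside $\CFa^*(Y,\spinc)$ — equivalently, the kernel of $\iota_m^*$ — is exactly the span of the duals $\x^*$ with $\Filt(\x)\ge m+1$; (3) translate this under the identification of (1) into a statement about filtration levels on $-Y$: the generator $\x$ has $-Y$-filtration level $-\Filt_Y(\x)$ (up to the overall normalization constant fixed by the requirement $\OneHalf\langle c_1(\spinc_0),[\widehat F]\rangle=0$, which is orientation-reversal-symmetric), so $\{\Filt_Y(\x)\ge m+1\}$ becomes $\{\Filt_{-Y}(\x)\le -m-1\}$, i.e.\ the subcomplex $\Filt(-Y,K,-m-1)$; (4) conclude that $\Ker(\iota_m^*)\cong\Filt(-Y,K,-m-1)$ and, dually, that $\CoKer(p_m^*)=\Filt^*(Y,K,m)\cong Q_\spinc(-Y,K,-m-1)$, and that these identifications are compatible with the maps in the two short exact sequences, giving the commuting diagram. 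One must also check that the differentials match: this is immediate from (1) together with the fact that $\del$ on $-Y$ strictly drops, or preserves, the $z$-filtration, because the relevant disks have $n_z(\overline\phi)=n_z(\phi)\ge 0$.

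The cleanest way to organize the argument is to first prove the middle column (this is Proposition~\ref{prop:Duality}) and then note that the left and right columns are forced: the left column is obtained by restricting the middle isomorphism to the sub-/quotient complexes cut out by the filtration, and the right column is the induced map on the complementary quotients/subcomplexes. Naturality of all three identifications — the fact that the squares in \eqref{eq:knotduality} commute — then follows because every map in sight ($\iota_m^*$, $p_m^*$, $\del$, $\delta$) is already built from the chain-level identification, so there is nothing further to check beyond the filtration-index bookkeeping in step (3). I would emphasize in the write-up that the $-m-1$ (rather than $-m$) reflects the standard fact that the annihilator of $\{\Filt\le m\}$ is $\{\Filt\ge m+1\}^*$, i.e.\ the ``off-by-one'' is purely linear-algebraic and not geometric.
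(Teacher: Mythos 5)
Your proposal follows the same overall strategy as the paper: the chain-level duality from Proposition~\ref{prop:Duality}, the observation that $\Ker(\iota_m^*)$ is the annihilator of the subcomplex and hence is spanned by the duals with $\Filt(\x)\ge m+1$ (the source of the $+1$), and a sign flip of the absolute filtration under orientation reversal. Your closing remark correctly isolates the off-by-one as pure linear algebra; incidentally this contradicts your opening paragraph, where you attribute that shift to ``tracking $w$ versus $z$'' --- the $w$--$z$ bookkeeping is what governs the sign flip, not the $+1$.

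The genuine gap is in step~(3). Your relative-filtration observation (that $n_z$, $n_w$ are preserved while disks reverse direction) only establishes that, under the identification of generators, the $-Y$-filtration level of $\x$ equals $-\Filt(\x)+C$ for some constant $C$ independent of $\x$. The content of the proposition is precisely that $C=0$, and you cannot dispatch this by asserting the normalization constraint is ``orientation-reversal-symmetric''; the constraint $\OneHalf\langle c_1(\spinc_0),[\widehat F]\rangle=0$ is imposed separately on the two copies of $Y_0(K)$, and one must actually compute how the absolute filtration level transforms. The paper does this by writing the absolute level as $j=\OneHalf\langle c_1(\sRelSpinC(\x)),[\widehat F]\rangle$, expanding via the Ozsv\'ath--Szab\'o formula as $\OneHalf\EulerMeasure(\PerDom)+n_{\x}(\PerDom)$ for a periodic domain $\PerDom$ representing $[\widehat F]$, and then noting that reversing the orientation of $\Sigma$ changes the sign of all multiplicities of $\PerDom$, so both $\EulerMeasure(\PerDom)$ and $n_{\x}(\PerDom)$ change sign. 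That explicit periodic-domain computation is what your write-up must supply to pin down $C=0$; without it, step~(3) is an assertion rather than a proof.
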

\begin{remark} Here, and throughout, we denote by $-\iota_m$ and $-p_m$ the inclusion and projection maps for the short exact sequence corresponding to $\Filt(-Y,K,m)$, and $-I_m$ and $-P_m$ for the corresponding maps on homology.  Note that we have suppressed $[F]$ to simplify notation.
\end{remark}
\begin{proof} Upon dualizing, it is immediate that subcomplexes become quotient complexes, and conversely.  Thus it remains to see that we can identify filtrations as stated.  As in Proposition \ref{prop:Duality}, we can obtain a doubly-pointed Heegaard diagram for $(-Y,K)$ from one representing $(Y,K)$ by either reversing the orientation of $\Sigma$ or switching the roles of the $\alpha$ and $\beta$ curves.  In either event, the net result was that Whitney disks reversed direction (i.e. $\phi\in \pi_2(\x,\y)$ became a disk $\phi'\in \pi_2(\y,\x)$), but intersection numbers $n_z(\phi),n_w(\phi)$ were unchanged.  Now the relative filtration difference between two intersection points $\x,\y$ can be computed by the equation:
	$$\Filt(\x)-\Filt(\y)=n_z(\phi)-n_w(\phi),$$
\noindent with $\phi\in \pi_2(\x,\y)$  any Whitney disk connecting $\x$ to $\y$. It follows that  	
$$\Filt(\x)-\Filt(\y)=\overline{\Filt}(\y)-\overline{\Filt}(\x),$$ 
\noindent where we temporarily use the notation $\overline{\Filt}$ to indicate the filtration of $\CFa(-Y)$ induced by $K$.  Thus the relative $\Z$-filtration is reversed (changes sign)  upon changing the orientation of $Y$. It follows that $Q_\spinc^*(Y,K,m)$ is isomorphic to $\Filt(-Y,K,m')$ for some $m'$.  It remains to see that $m'=-m-1$.  This would follow if we could show that reversing the orientation of $Y$  reverses the absolute $\Z$-filtration of a generator $\x\in \Ta\cap \Tb$, i.e. $\overline{\Filt(\x)}=-\Filt(\x)$. To this end, recall that the $\Z\oplus\Z$ filtration  of a generator $[\x,i,j] \in \CFKinf(Y,K)$ is given by $(i,j)$ and that 
$$\underline{{\spinc}}(\x) + (i-j)\mathrm{PD}(\mu)=\spinc_0.$$
\noindent Evaluating the Chern class of the $\SpinC$ structure on both sides against $[\widehat{F}]$ yields:
$$\langle \underline{{\spinc}}(\x),[\widehat{F}]\rangle + 2(i-j)=0.$$

Letting $i=0$, it follows that the absolute filtration grading of a generator $\x\in \CFa(Y)$ is given by $j=\OneHalf \langle c_1(\underline{\spinc}(\x)),[\widehat{F}]\rangle$.  This number, in turn, is given by $$\OneHalf \langle c_1(\underline{\spinc}(\x)),[\widehat{F}]\rangle= \OneHalf \EulerMeasure(\PerDom) + n_{\x}(\PerDom),$$
\noindent where $\EulerMeasure(\PerDom)$ is the Euler measure of a periodic domain, $\PerDom$, whose homology class corresponds to $[\widehat{F}]\in H_2(Y_0(K))$, and $n_{\x}(\PerDom)$ is the average of the local multiplicities of $\PerDom$ near the individual intersection points on $\Sigma$ which constitute $\x$  (see Section $7$ of \cite{HolDiskTwo}, specifically Proposition $7.5$, and also Section $2.3$ of \cite{Knots} for further explanation of these terms and the above formula).   Fix a periodic domain $\PerDom$ for $\Sigma$ whose homology class corresponds to $[\widehat{F}]$ and represent it by a map  $$\Phi: (S,\partial S) \rightarrow (\Sigma,\alphas\cup \betas),$$
\noindent where $(S,\partial S)$ is a surface-with-boundary. If we now realize the orientation reversal of $Y$ by reversing the orientation of $\Sigma$, then the map $\Phi$ still gives rise to a periodic domain whose homology class represents $[\widehat{F}]$.  However, the orientation reversal of $\Sigma$  changes the sign of the multiplicities of Im$(\Phi)$. 
It follows that $\OneHalf\EulerMeasure(\PerDom)$ and  $n_{\x}(\PerDom)$ both change sign, and hence $\overline{\Filt}(\x)=-\Filt(\x),$ as claimed.
\end{proof}

\noindent The above propositions show that the pairings:
$$\langle -,-\rangle : \CFa(-Y,\spinc)\otimes \CFa(Y,\spinc)\rightarrow \Z/2\Z,$$
$$\langle -,-\rangle_m : Q_\spinc(-Y,K,\sd m \sd1)\otimes \Filt(Y,K,m)\rightarrow \Z/2\Z,$$

\noindent defined by

	$$\langle \x,\y\rangle = \left\{\begin{array}{ll} 
		1  & \mathrm{if} \ \x=\y\\
		0 & \mathrm{otherwise}\\
	\end{array}\right.$$

\noindent descend to yield  pairings
\begin{eqnarray} \label{eq:pairing}
	\langle -,-\rangle: & \HFa(-Y,\spinc)\otimes \HFa(Y,\spinc)\rightarrow \Z/2\Z \\
\langle -,-\rangle_m: & H_*(Q_\spinc(-Y,K,\sd m \sd1))\otimes H_*(\Filt(Y,K,m))\rightarrow \Z/2\Z.
\end{eqnarray}

\noindent (again, we momentarily suppress $[F]$.)

Thus, given a Floer class $[y]\ne 0\in \HFa(-Y,\spinc)$, a Seifert surface, $F$, and a knot $K$  there are two natural numerical invariants associated to the triple $([y],[F],K)$.   The first invariant is simply $\tau_{[y]}(-Y,[F],K)$ of Definition \ref{defn:tau_x}.  The next uses the filtration $\Filt(Y,[F],K)$ which $K$ induces on $\CFa(Y,\spinc)$.  It measures when the filtration first starts hitting homology classes in $\HFa(Y,\spinc)$ which pair non-trivially with $[y]$.

\begin{defn}
	\label{defn:tau_xdual}
	$$	\tauydualF=\mathrm{min}\{m\in\Z|\ \exists \alpha\in\mathrm{Im} \ I_m \ \mathrm{such \ that \ } \langle [y],\alpha\rangle \ne 0\}. $$
\end{defn}

\begin{remark} Like $\tauxF$, the dependence of $\tauydualF$ on $[F]$ is given by:
	$$\tauydualF-\tau^*_{[y]}(Y,[F'],K)= 	\OneHalf \langle c_1(\spinc),[F\!-\!F'] \rangle.$$
\end{remark}

\noindent {\bf Example: The three-sphere} We conclude this subsection by briefly discussing the case of knots in $S^3$.  In this case, $\HFa(S^3)\cong\Z/2\Z$, supported in grading zero, and we have a canonical Floer homology class given by the generator $\Theta$.  Further, since $-S^3\cong S^3$, we also have $\HFa(-S^3)\cong \Z/2\Z$ (in grading zero) and a canonical generator $\Omega$.  Here we have equality
$\tau_{\Theta}(S^3,K)=\tau^*_{\Omega}(S^3,K)$.   Following \ons \ \cite{FourBall}, we denote this invariant by $\tau(K)$ (this invariant was also defined and studied by Rasmussen \cite{Ras1}).  Since its discovery,  $\tau(K)$ has proved to be rich with geometric content. Indeed, the original motivation for its definition is that $\tau(K)$ is an invariant of the smooth concordance class of $K$ and furthermore provides bounds for the smooth four-ball genus:
$$|\tau(K)|\le g_4(K).$$
Plamenevskaya \cite{Olga} showed that $\tau(K)$ provides bounds on the classical invariants of Legendrian knots in $(S^3,\xi_{std})$ and work of the author \cite{SQPfiber} has shown that $\tau(K)$ detects when a fibered knot bounds a complex curve in the four-dimensional unit ball $B^4\subset \C^2$ of genus equal to the Seifert genus of $K$.

\subsection{Surgery Formula}
\label{subsec:Surgeries}

Let $K\hookrightarrow Y$ be a knot.  A {\em framing} of $K$, denoted $\lambda$, is an isotopy class of simple closed curve on $\partial\nu(K)$ which intersects the meridian disk of $\nu(K)$ once, positively. Let $X_\lambda(K)$ denote the four-manifold obtained by attaching a four-dimensional
two-handle to $[0,1]\times Y$ along $K\hookrightarrow \{1\}\times Y$ with
framing $\lambda$. Note $$\partial X_\lambda(K)=Y\sqcup -Y_\lambda(K)= -Y_\lambda(K)\sqcup -(-Y)$$ \noindent where
 $Y_\lambda(K)$ is the three-manifold obtained by performing
 $\lambda$-framed Dehn surgery on $Y$ along $K$. Thus  $X_\lambda(K)$ can be thought of either as a cobordism from $Y$ to $Y_\lambda(K)$ or as a cobordism from $-Y_{\lambda}(K)$ to $-Y$.  When adopting the latter point of view we denote the cobordism by $\overline{X}_\lambda(K)$.  Given a Spin$^c$ structure
$\spinct$ on $X_\lambda(K)$, there are induced maps $${\widehat
F}_{X_\lambda(K),\spinct}\colon \HFa(Y,\spinct|_Y) \longrightarrow
\HFa(Y_{\lambda}(K),\spinct|_{Y_\lambda(K)}),$$
$${\widehat
F}_{\overline{X}_\lambda(K),\spinct}\colon 
\HFa(-Y_{\lambda}(K),\spinct|_{-Y_\lambda(K)})\longrightarrow \HFa(-Y,\spinct|_{-Y}),$$

\noindent (and also maps for the other versions of Floer homology). These maps are dual to each other under the pairing of Equation \eqref{eq:pairing}: 
\begin{equation}\label{eq:duality}
\langle {\widehat {F}}_{\overline{X}_\lambda(K),\spinct}([x]),[y]\rangle = \langle [x],  {\widehat {F}}_{{X}_\lambda(K),\spinct}([y])\rangle.
\end{equation}
\noindent The maps are induced from corresponding chain maps obtained by counting
pseudo-holomorphic triangles in $\Sym^g(\Sigma)$, as explained in
Section $9$ of \cite{HolDiskTwo}. It was proved in \cite{HolDiskFour} that the maps are invariants of the smooth four-manifold $X_\lambda(K)$ and  $\SpinC$ structures.  Note that \cite{HolDiskFour} assigns maps to arbitrary $\SpinC$ cobordisms, but these will be unnecessary for the present discussion.

Section $4$ of~\cite{Knots},  describes
the relationship between the knot filtration and the \os \ Floer
homologies of three-manifolds obtained by performing ``sufficiently
large'' integral surgeries on $Y$ along $K$.  Moreover, this relationship gives an interpretation of
some of the maps induced by cobordisms in terms of the knot filtration.
These results were generalized to include all rational surgeries on knots in rational homology spheres in \cite{IntegerSurgeries,RationalSurgeries}, but the results of \cite{Knots} will be sufficient for our purposes.  We review these results
here, and refer the reader to \cite{Knots,IntegerSurgeries,RationalSurgeries} for a more thorough
treatment.

Fix a null-homologous knot $K\hookrightarrow Y$, a Spin$^c$ structure, $\spinc\in \SpinC(Y)$, and a Seifert surface, $F$. Framings, $\lambda$, for $K$ are canonically identified with the
integers via the intersection number $\lambda \cm F$ (note that this number is independent of the choice of $F$). 
Further, for a given $n\!>\!0\in\Z$, there are natural
affine identifications $$\SpinC(Y_{\sd n}(K))\cong \SpinC(Y)\times\Zmod{n}$$
$$\SpinC(X_{\sd n}(K))\cong \SpinC(Y)\times\Z$$
where  $Y_{\sd n}(K)$
is the three-manifold obtained by $(\sd n)$-framed surgery on $Y$ along $K$, and  $X_{\sd n}(K)$ is  the associated two-handle cobordism  from $Y$ to $Y_{\sd n}(K)$.  To make these identifications precise, we first fix an orientation of $K$.  This induces an orientation on $F$. The oriented Seifert surface can be capped 
off inside the two-handle to obtain a closed surface ${\widehat
F}$. Now a given $\spinc'\in \SpinC(Y_{\sd n}(K))$  is then identified with a pair $[\spinc,m] \in \SpinC(Y)\times\Zmod{n}$ consisting of a $\SpinC$ structure $\spinc$ which is cobordant to $\spinc'$ via a $\SpinC$ structure, $\spinct_m$ on $X_{\sd n}(K)$ satisfying
\begin{equation}
\label{eq:spinc}
\langle c_1(\spinct_m),[{\widehat F}]\rangle - n = 2m 
\end{equation}
\noindent Furthermore, a $\SpinC$ structure $\spinct_m\in \SpinC(X_{\sd n}(K))$ is uniquely specified by the requirement that $\spinct_m|Y=\spinc$ and that Equation \eqref{eq:spinc} be satisfied.  This yields the latter identification above.  Note, however, that both identifications depend on the homology class $[F]\in H_2(Y;\Z)$.

Theorem $4.1$ of \cite{Knots} shows that for each integer
$m\in\Z$, there is an integer $N$ so that for all $n\geq N$,
we have the isomorphism:
$$ H_*(C_\spinc\{\min(i,j- m)=0\})\cong \HFa(Y_{\sd n}(K),[\spinc,m]).$$

\noindent There is a natural chain map $$f_m: C_\spinc\{i=0\}\longrightarrow
C_\spinc\{\min(i,j- m)=0\},$$ \noindent which is defined as the inclusion on the quotient complex $C_\spinc\{i=0,j\ge m\}$ and is zero for the subcomplex
$C_\spinc\{i=0,j<m\}$. The
proof of Theorem $4.1$ of \cite{Knots}
shows that $f_m$ induces the map  $$\widehat{F}_{X_{\sd n}(K),_{\spinct_m}}:\HFa(Y,\spinc)\longrightarrow \HFa(Y_{\sd n}(K),[\spinc,m]),$$ \noindent given by the two-handle addition, endowed with the unique
$\SpinC$ structure $\spinct_m$ restricting to $\spinc$ on $Y$ and satisfying Equation \eqref{eq:spinc} above
(again, provided that $n$ is sufficiently large compared to $m$ and the
genus of the knot).  Note that in order for this theorem to be used as stated, the labeling of $\SpinC$ structures on $Y_{\sd n}(K)$ and $X_{\sd n}(K)$ must be induced by the same homology class of Seifert surface as used in the definition of $\CFKinf(Y,[F],K,\spinc)$.   Finally, we remark that Theorem $4.1$ is stronger than what we have stated.  It identifies the various \os \ homologies, $\HFp(Y_{\sd n}(K),[\spinc,m]), \HFm(Y_{\sd n}(K),[\spinc,m]),$ and $\HFinf(Y_{\sd n}(K),[\spinc,m])$ with the homology of certain sub and quotient complexes of $\CFKinf(Y,[F],K,\spinc)$.  This level of generality, however, will not be necessary for our purposes.

\subsection{Background on the \os \ contact invariant}

\label{subsec:contact}
In this subsection we briefly review the definition and basic properties of the \os \ contact invariant. 
A fundamental theorem in three-dimensional contact geometry, due to Giroux \cite{Giroux}, states that the construction of Thurston and Winkelnkemper \cite{Thurston} discussed in the introduction can be reversed.  Moreover, Giroux's theorem states that there is an equivalence:
$$\frac{\{\mathrm{open \ book \ decompositions \  of\ Y^3}\}}{\{\mathrm{positive\ Hopf\ stabilization}\}} \simeq \frac{\{\mathrm{contact\ structures\ on\ Y^3}\}}{\{\mathrm{isotopy}\}}$$
\noindent See \cite{Etnyre2} for an exposition of this theorem.

Thus, associated to a contact structure is an equivalence class of open book decompositions of $Y$, where any two open books are related by a sequence of plumbing and deplumbing of positive Hopf bands.  

Choose then a fibered knot $(F,K)$ whose associated open book decomposition supports the contact structure $(Y,\xi)$.  In \cite{Contact} \ons \ show that the knot Floer homology of a fibered knot satisfies:
$$H_*({\mathcal{F}}_{\spinc_\xi}(\sd Y,[F],K,\sd g(F)))\cong \Z/2\Z,$$

 \noindent where $\spinc_{\xi}$ is the $\SpinC$ structure on $Y$ associated to the contact structure, $\xi$. They further show that this group is generated by a homogeneous cycle supported in grading equal to the Hopf invariant of the two-plane field of $\xi$.   Let $c_0$ denote a generator of this group. We define:
 $$c(F,K) = \sd I_{\sd g(F)}(c_0)\in \HFa(\sd Y,\spinc_{\xi}),$$
\noindent  where $\sd I_{\sd g(F)}$, as above, is the map on homology induced by the inclusion  
$$ \sd \iota_{\sd g(F)}: \mathcal{F}_{\spinc_\xi}(\sd Y,[F],K,\sd g(F))\longrightarrow \CFa(\sd Y,\spinc_{\xi}).$$
\noindent \ons \ showed that $c(F,K)$ depends only on the contact structure induced by the open book decomposition associated to $(F,K)$ (Theorem  $1.3$ of \cite{Contact}).  Thus we have the  \os \ contact invariant: 
$$c(\xi):= c(F,K),$$
\noindent where $(F,K)$ is any fibered knot whose open book supports $\xi$.

 The contact invariant enjoys the following properties:
\begin{enumerate}
	\item (Vanishing \cite{Contact})  If $\xi$ is overtwisted then $c(\xi)=0$
	\item (Non-Vanishing \cite{GenusBounds}) If $(W,\omega)$ is a strong  symplectic filling of $(Y,\xi)$ then $c(\xi)\ne 0$.
	\item (Naturality \cite{Contact,Lisca,Olga2,Ghiggini}) If $(\sympW,\omega)$ denotes the symplectic cobordism between $(Y,\xi)$ and $(Y_K, \xi_K)$ induced by Legendrian surgery along a Legendrian knot $K$,	then we have $$\widehat{F}_{\sympWbar,{\spinck}}(c(\xi_K))=c(\xi),$$ where $\spinck$ is the canonical 
		$\SpinC$ structure induced by the symplectic form $\omega$.  Furthermore, if $\spinct\ne \spinck$ we have $$\widehat{F}_{\sympWbar,\spinct}(c(\xi_K))=0$$
	\item (Product Formula) Let $(Y_1\#Y_2,\xi_1\#\xi_2)$ denote the contact connected sum of $(Y_1,\xi_1)$ and $(Y_2,\xi_2)$ (see \cite{Etnyre1}).  Theorem $6.1$ of \cite{HolDiskTwo} indicates that there is an isomorphism $$\HFa(\sd Y_1\#\sd Y_2,\spinc_{\xi_1}\#\spinc_{\xi_2})\cong \HFa(\sd Y_1,\spinc_{\xi_1})\otimes_{\Z/2\Z} \HFa(\sd Y_2,\spinc_{\xi_2}).$$  Under this isomorphism, $c(Y_1\#Y_2,\xi_1\#\xi_2)=c(Y_1,\xi_1)\otimes c(Y_2,\xi_2)$. 
\end{enumerate}
\bigskip

\noindent We expound upon Properties $3$ and $4$.  To understand Property $3$, first recall that to a contact three-manifold $(Y,\xi)$ with a Legendrian knot, $K$, Weinstein \cite{Weinstein} constructs a symplectic cobordism  $(\sympW,\omega)$ between $(Y,\xi)$ and a contact manifold $(Y_K,\xi_K)$.  Topologically, $Y_K$ is the manifold obtained by  $(tb\!-\!1)$-framed surgery along $K$ and $W_K$ is the corresponding two-handle cobordism,  where $tb$ is the Thurston-Bennequin number of $K$.  The contact structure $\xi_K$ is constructed so that it agrees with $\xi$ on $Y\!-\!\nu(K)$.  Gompf shows (Proposition $2.3$ of \cite{Gompf}) that the first Chern class of the canonical $\SpinC$ structure, $\spinck$ of $(\sympW,\omega)$ satisfies:
$$\langle c_1(\spinck),[\widehat{F}]\rangle = rot_F(K),$$
\noindent and it is clear that $$tb(K)- 1= [\widehat{F}]\cm[\widehat{F}]$$
\noindent where $F$ is the Seifert surface for $K$ defining the $0$-framing, and $\widehat{F}$ is the closed surface in the cobordism obtained by capping off $F$ with the core of the two-handle. Now we have $$\partial \sympWbar = (- Y_K)\sqcup - (- Y).$$
\noindent  As a cobordism from $- Y_K$ to $- Y$, $\sympWbar$ induces maps on Floer homology as described in the preceding subsection, and the naturality statement says that the contact invariants behave nicely in the presence of the symplectic structure on $\sympW$.   We should  mention that the naturality property was proved in increasing levels of generality by \ons \ (Theorem $4.2$ of \cite{Contact}), Lisca and Stipsicz (Theorem $2.3$ of \cite{Lisca}) and Ghiggini (Proposition 3.3 of \cite{Ghiggini}).  In fact, Lisca and Stipsicz's result is a naturality statement for the contact invariant under contact $+1$ surgery i.e. $tb+1$ framed surgery, and does not make mention of the $\SpinC$ structure on $W_{tb+1}(K)$, but instead sums over all $\SpinC$ structures.  The statement we have included  as Property $3$ is nearly identical to Ghiggini's result, but here we have stated the result for the maps induced on the ``hat'' version of \os \ homology.  Ghiggini's result is for an analogous contact invariant $c^+(Y,\xi)\in \HFp(-Y,\spinc_\xi)$ and for the map on $\HFp$ induced by Weinstein's cobordism.  As stated, Property $3$ follows easily from Ghiggini's result and naturality of the long exact sequence relating $\HFp$ to $\HFa$ (Lemma $4.4$ of \cite{HolDisk}) with respect to maps induced by cobordisms:

To the best of our knowledge, a proof of Property $4$ does not exist in the literature but is straightforward.  For completeness, we spell out the details here.
\bigskip

\noindent {\bf Proof of Property $4$. } Let $K_1\hookrightarrow Y_1$ and $K_2\hookrightarrow Y_2$ be fibered knots equipped with fiber surfaces $F_1$ and $F_2$ whose associated open book decompositions induce $(Y_1,\xi_1)$ and $(Y_2,\xi_2)$, respectively.  Then the connected sum $K_1\#K_2 \hookrightarrow Y_1\#Y_2$  is a fibered knot equipped with fiber surface $F_1\natural F_2$, where $\natural$ denotes boundary connected sum.   Torisu \cite{Torisu} shows that the contact structure associated to the resulting open book decomposition of $Y_1\#Y_2$ is isotopic to $(Y_1\#Y_2,\xi_1\#\xi_2)$.  As for the Floer homology of $Y_1\#Y_2$,  \ons \ proved (Proposition $6.1$ of \cite{HolDiskTwo}) that there is an isomorphism $$\HFa(\sd Y_1\#\sd Y_2,\spinc_{\xi_1}\#\spinc_{\xi_2})\cong \HFa(\sd Y_1,\spinc_{\xi_1})\otimes_{\Z/2\Z} \HFa(\sd Y_2,\spinc_{\xi_2}),$$\noindent  induced by a chain homotopy equivalence \begin{equation}\label{eq:tensor} \CFa(\sd Y_1\#\sd Y_2,\spinc_{\xi_1}\#\spinc_{\xi_2})\cong \CFa(\sd Y_1,\spinc_{\xi_1})\otimes_{\Z/2\Z} \CFa(\sd Y_2,\spinc_{\xi_2}).\end{equation}
	\noindent Theorem $7.1$ of \cite{Knots} states that an analogous result holds in the category of filtered chain complexes when we form the connected sum of knots.  More precisely, recall from Subsection \ref{subsec:Filtration} that associated to $K_j$ we have a $\Z$-filtration of $\CFa(\sd Y_j,\spinc_j)$, $j=1,2$.  We denoted the subcomplexes of this filtration by $\mathcal{F}_{\spinc_j}(\sd Y_j,K_j,m)$, so that there are inclusions:
$$ \sd\iota_m^{K_j}: \mathcal{F}_{\spinc_j}(\sd Y_j,K_j,m)\hookrightarrow \CFa(\sd Y_j,\spinc_j)$$

The inclusion maps $\sd\iota_m^{K_j}$ induce a filtration of $\CFa(\sd Y_1,\spinc_1)\otimes_{\Z/2\Z}
\CFa(\sd Y_2,\spinc_2)$ as the image of $$
\sum_{m_1+m_2=m}\sd\iota_{m_1}^{K_1}\otimes \sd\iota_{m_2}^{K_2}\colon \!\!\!
\bigoplus_{m_1+m_2=m} \!\!\!\mathcal{F}_{\spinc_1}(\sd Y_1,K_1,m_1)\otimes\mathcal{F}_{\spinc_2}(\sd Y_2,K_2,m_2) \longrightarrow \CFa(\sd Y_1,\spinc_1)\otimes \CFa(\sd Y_2,\spinc_2).$$ 

According to Theorem $7.1$
of \cite{Knots}, under the chain homotopy equivalence given by Equation \eqref{eq:tensor}, the above filtration of $\CFa(\sd Y_1,\spinc_1)\otimes_{\Z/2\Z}
\CFa(\sd Y_2,\spinc_2)$ is
identified with the filtration of $\CFa(\sd Y_1\#\sd Y_2,\spinc_1\#\spinc_2)$ induced by the connected
sum $K_1\# K_2$.

It follows immediately that 
$$\sd\iota_{\sd g(F_1\natural F_2)}^{K_1\#K_2}(c_0(K_1\#K_2))= \sd\iota_{\sd g(F_1)}^{K_1}(c_0(K_1))\otimes \sd\iota_{\sd g(F_2)}^{K_2}(c_0(K_2)),$$
\noindent where $c_0(K_1\#K_2)$, $c_0(K_1)$, and $c_0(K_2)$ are cycles whose homology classes generate $$H_*(\mathcal{F}_{\spinc_1\# \spinc_2}(\sd Y_1\#\sd Y_2,K_1\#K_2,\sd g(F_1\natural F_2))\cong  $$
$$\ \ \ \ \ H_*(\mathcal{F}_{\spinc_1}(\sd Y_1,K_1,\sd g(F_1))\cong  H_*(\mathcal{F}_{\spinc_2}(\sd Y_2, K_2,\sd g(F_1))\cong \Z/2\Z,$$ \noindent respectively.   Property $4$ now follows from the definition of the contact invariant. $\square$

\subsection{Definition of $\tauc$}
With all necessary background in place, we can define the invariant which will be our main object of study:
\begin{defn}
	\label{defn:tauc} Let $K\hookrightarrow Y$ be a knot, $F$ a Seifert surface for $K$, and $\xi$ a contact structure with $c(\xi)\ne0$, then
	$$\taucF := \tau^*_{c(\xi)}(Y,[F],K),$$
	\noindent where the right-hand side is the invariant of Definition \ref{defn:tau_xdual}.
\end{defn}

Note that our notation suppresses the $\SpinC$ structure on $Y$, but that it is implicitly specified, since $c(\xi) \in \HFa(-Y,\spinc_{\xi})$. Thus $\taucF$ is defined via the filtration of $\CFa(Y,\spinc_{\xi})$.  It is immediate from the theorems of \ons \ concerning the invariance of $\HFa(Y,\spinc)$, ${{\Filt}}(Y,[F],K,m)$, and $c(\xi)$, that $\taucF$ is an invariant of the quadruple $(Y,[F],K,\xi)$ (the invariance theorems alluded to are Theorem $1.1$ of \cite{HolDisk}, Theorem $3.1$ of \cite{Knots}, and Theorem $1.3$ of \cite{Contact}, respectively).

\section{Properties of $\tau_{[x]}(Y,K)$ and $\tau^*_{[y]}(Y,K)$}
\label{sec:Properties}
Fix non-vanishing Floer classes $[x]\in \HFa(Y)$ and  $[y]\in \HFa(-Y)$. In this section we prove some basic properties of $\taux$,  $\tauydual$, (see Definitions \ref{defn:tau_x}, \ref{defn:tau_xdual}). Throughout, we will suppress the Seifert surface from the notation as much as possible, calling attention to its role when  when there may be ambiguity.  The properties here generalize properties of the \os \ concordance invariant, $\tau(K)$, most of which are established in Section $3$ of \cite{FourBall}. For the present paper we will be primarily interested in the case when $[y]=\contact$, and indeed the main theorems will utilize $\tauc:= \tau^*_{\contact}(Y,K)$. We choose to discuss the more general invariants for arbitrary non-zero classes since they also contain geometric content, see for instance \cite{Grigsby}.  It will thus be useful to collect in one place the general algebraic properties of these invariants.  The following section will use the properties developed here to prove the theorems stated in the introduction.

Let $W_{\!\sd n}(K)$ be the cobordism from from $Y$ to $Y_{\sd n}(K)$ induced from the two-handle attachment along $K\hookrightarrow Y$ with framing $-n\!<\!0$.  Subsection \ref{subsec:Surgeries} indicates that associated to each $\spinct\in \SpinC(W_{\!\sd n}(K))$ there is a map: $${\widehat F}_{W_{\!\sd n}(K),\spinct}: \HFa(Y,\spinct|_Y)\longrightarrow \HFa(Y_{\!\sd n}(K),\spinct|_{Y_{\!\sd n}(K)}),$$
\noindent  Fix $\spinc\in \SpinC(Y)$.  To simplify notation, we use ${\widehat F}_{\!\sd n,m}$ to denote the map ${\widehat F}_{W_{\!\sd n}(K),\spinct_m}$ associated to  the unique $\spinct_m\in \SpinC(W_{\!\sd n}(K))$  satisfying:
\begin{itemize}
	\item $\spinct_m|Y=\spinc$
	\item $\langle c_1(\spinct_m),[{\widehat F}]\rangle - n = 2m,$
\end{itemize} where, by an abuse of notation, $[{\widehat F}]$ denotes (in addition to the map of Floer homology) the homology class of a fixed Seifert surface, $F$, capped off in the two-handle to yield a closed surface, ${\widehat F}$. In light of the relationship between the knot Floer homology filtration and the maps on Floer homology induced by four-dimensional two-handle attachment, we have the following proposition.  Roughly speaking, it says that $\taux$ controls when $\Mapn$ maps $[x]$ non-trivially.

\begin{prop}
	\label{prop:FourDInterp} (Compare Proposition $3.1$  of \cite{FourBall}) Let $[x]\!\ne\!0\in \HFa(Y,\spinc)$ be a non-trivial Floer homology class and let $n>0$ be sufficiently large. We have
	\begin{itemize} 
		\item 	If $m<\taux$, then $\Mapn([x])\ne 0$
 		\item  If $m>\taux$, then $\Mapn([x])=0$
	\end{itemize}
\end{prop}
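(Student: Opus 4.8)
The strategy is to reduce the statement to the large-surgery identification of Theorem $4.1$ of \cite{Knots}, exactly as in the proof of Proposition $3.1$ of \cite{FourBall}, but keeping careful track of the filtration. Recall that for $n$ sufficiently large the chain map $f_m\colon C_\spinc\{i=0\}\to C_\spinc\{\min(i,j-m)=0\}$ induces $\Mapn$ under the identifications $C_\spinc\{i=0\}\cong \CFa(Y,\spinc)$ and $H_*(C_\spinc\{\min(i,j-m)=0\})\cong\HFa(Y_{\sd n}(K),[\spinc,m])$. The key observation is that $f_m$ is, by definition, zero on the subcomplex $C_\spinc\{i=0,\ j<m\}=\filtYF{m-1}$ and is the natural inclusion on the quotient $C_\spinc\{i=0,\ j\ge m\}$. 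In other words, $f_m$ factors through the quotient $\CFa(Y,\spinc)/\filtYF{m-1}$, and on that quotient it is a \emph{filtered quasi-isomorphism onto the relevant piece}; more precisely, composing with the projection, $f_m$ identifies $\CFa(Y,\spinc)/\filtYF{m-1}$ with $C_\spinc\{i=0,j\ge m\}$, which is a subcomplex of $C_\spinc\{\min(i,j-m)=0\}$, and this subcomplex inclusion is a quasi-isomorphism (the quotient $C_\spinc\{i=0,j<m\}\hookrightarrow\CFa$ being acyclic is not what happens; rather it is $C_\spinc\{i<0,j=m\}$ that is the acyclic complementary piece — I would check which of the two sub/quotient pieces of $C_\spinc\{\min(i,j-m)=0\}$ is acyclic, using that $C_\spinc\{i<0\}$ computes $\HFm$-type information that vanishes in the relevant range, cf. the proof of Theorem $4.1$).

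\textbf{Key steps.} First I would record the factorization $\Mapn = (\text{inclusion-induced map}) \circ p_{m-1,*}$ where $p_{m-1}\colon\CFa(Y,\spinc)\to \CFa(Y,\spinc)/\filtYF{m-1}$ is the projection and the second map is an isomorphism onto its image (an injection on homology) coming from the large-surgery theorem. Granting this, the proposition is almost immediate from the definitions of $\taux$ and, via the pairing, $\tauydual$ — but note the statement concerns $\taux$, so I work directly with Definition \ref{defn:tau_x}. Second step: if $m>\taux$, then $m-1\ge\taux$, so by definition $[x]\in\Image I_{m-1}$, i.e. $[x]$ lifts to $H_*(\filtYF{m-1})$; hence $p_{m-1,*}([x])=0$ in $H_*(\CFa/\filtYF{m-1})$, and therefore $\Mapn([x])=0$. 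Third step: if $m<\taux$, then $m-1<\taux$, so by minimality in Definition \ref{defn:tau_x} we have $[x]\notin\Image I_{m-1}$; I must upgrade this to $p_{m-1,*}([x])\ne 0$ and then conclude $\Mapn([x])\ne 0$ using injectivity of the second map. The passage from ``$[x]\notin\Image I_{m-1}$'' to ``$p_{m-1,*}([x])\ne 0$'' uses the long exact sequence of the pair $(\CFa(Y,\spinc),\filtYF{m-1})$: the image of $I_{m-1}$ is exactly the kernel of $p_{m-1,*}$, so $[x]\notin\ker p_{m-1,*}$, which is what we want.

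\textbf{Main obstacle.} The delicate point is establishing that the second map — the inclusion of $C_\spinc\{i=0,j\ge m\}$ into $C_\spinc\{\min(i,j-m)=0\}$, equivalently the large-$n$ surgery identification restricted to the quotient $\CFa/\filtYF{m-1}$ — is \emph{injective on homology} (so that nonvanishing of $p_{m-1,*}([x])$ forces nonvanishing of $\Mapn([x])$), and dually that it is \emph{surjective enough} for the $m>\taux$ direction to be tight. This is precisely the content extracted from the proof of Theorem $4.1$ of \cite{Knots}: the complementary subquotient $C_\spinc\{i<0,\ j=m\}$ of $C_\spinc\{\min(i,j-m)=0\}$ is acyclic for $n$ large (it is a translate of a piece of $\CFm$-type complex lying entirely in the region where $\HFm\to\HFinf$ has been killed), which gives the quasi-isomorphism. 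I would cite this rather than reprove it, mirroring how \cite{FourBall} invokes it. A secondary bookkeeping issue is the indexing shift between ``$j\ge m$'' and the filtration level ``$\le m-1$'' versus ``$\le m$''; I would fix conventions once at the start (matching $\filtYF{m}=C_\spinc\{i=0,\ j\le m\}$ as in Subsection \ref{subsec:Filtration}) and carry the $-1$ consistently, which is exactly the source of the asymmetry ``$m<\taux$'' vs. ``$m>\taux$'' with no statement at $m=\taux$.
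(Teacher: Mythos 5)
Your argument for the direction $m>\taux$ is essentially identical to the paper's: $f_m$ vanishes on $\filtY{m-1}$, so $\Mapn$ factors through $P_{m-1}$, and $m>\taux$ forces $[x]\in\Image I_{m-1}$, i.e.\ $P_{m-1}([x])=0$. Where you diverge is in the direction $m<\taux$, and it is here that you have a genuine gap.

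The paper does \emph{not} run the $m<\taux$ direction through the quotient $\CFa/\filtY{m-1}$. Instead it sets up a map of short exact sequences in which the top row is $0\to\filtY{m}\to\CFa\to Q_\spinc(Y,K,m)\to 0$, the bottom row is $0\to C_\spinc\{i\geq 0,j=m\}\to C_\spinc\{\min(i,j-m)=0\}\to Q_\spinc(Y,K,m)\to 0$, the middle vertical map is $f_m$, and the \emph{right} vertical map is the identity. Commutativity of the right square gives, on homology, $(\text{proj})_*\circ\Mapn = P_m$. Now $m<\taux$ means $[x]\notin\Image I_m$, hence $P_m([x])\ne 0$ by the upper long exact sequence, and hence $\Mapn([x])\ne 0$ because it has a nonzero image under $(\text{proj})_*$. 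This requires no injectivity anywhere. You instead use the weaker fact $[x]\notin\Image I_{m-1}$ and must then prove that the inclusion $C_\spinc\{i=0,j\ge m\}\hookrightarrow C_\spinc\{\min(i,j-m)=0\}$ is injective on homology — a statement you explicitly flag as the main obstacle and do not establish. Moreover, your proposed route to it is based on a misidentification: the complementary quotient of $C_\spinc\{i=0,j\ge m\}$ inside $C_\spinc\{\min(i,j-m)=0\}$ is $C_\spinc\{i>0,j=m\}$, not $C_\spinc\{i<0,j=m\}$; the latter does not even live in $C_\spinc\{\min(i,j-m)=0\}$, since $i<0$ forces $\min(i,j-m)<0$. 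Worth noting as a sanity check: if your injectivity held, you would actually conclude $\Mapn([x])\ne 0$ at $m=\taux$ as well, giving a sharper statement than the proposition (and than Proposition $3.1$ of \cite{FourBall}); the paper's formulation deliberately leaves $m=\taux$ undecided precisely because the clean argument via $P_m$ does not address it. The fix is simply to use $P_m$ rather than $P_{m-1}$ in the $m<\taux$ direction, as the paper does.
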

\begin{remark} Changing $[F]$ changes $\taux$ according to Remark \ref{remark:dependence}.  However, changing $[F]$ also changes the labeling of $\spinct_m\in \SpinC(W_{\!\sd n}(K))$ according to Equation \eqref{eq:spinc}, and the two changes cancel. 
\end{remark}
\begin{proof}
Consider the following commutative diagram of chain complexes and chain maps,
\begin{equation}
\label{eq:Diag1}
\begin{CD}
	0\to \filtY{m} @>{\iota_m}>> C_\spinc\{i=0\}\simeq\CFa(Y,\spinc) @>{p_m}>> Q_\spinc(Y,K,m)\to0\\
 @V{\Pi}VV @V{f_m}VV @V{\cong}VV \\ 
0\to C_\spinc\{i\geq 0,j=m\} @>>> C_\spinc\{\min(i,j-m)=0\} @>>>Q_\spinc(Y,K,m) \to0,
\end{CD}
\end{equation}
   The vertical map on the left is defined to vanish on the subcomplex $ \Cij{=0}{\le m-1}$ of $\Filt(Y,K,m)=\Cij{=0}{\le m}$, while the vertical map on the right is simply the identity.  The middle vertical map is the chain map described in Subsection \ref{subsec:Surgeries}, which vanishes on the subcomplex, $\Cij{=0}{< m}$ of $\Ci{=0}$.  Theorem $4.1$ of \cite{Knots} states that
for $n$ sufficiently large, we have an identification
$$C_\spinc\{\min(i,j-m)=0\}\simeq\CFa(Y_{\!\sd n}(K),[\spinc,m])$$ under which the map
$f_m$ represents the chain map inducing ${\widehat F}_{\sd n,m}$ above.  Let $I_m$, $P_m$ denote the maps on homology induced by $\iota_m,p_m$. Now, if
$m<\tau_{[x]}(Y,K)$, we have that $P_m([x])\ne 0$ (by the long exact sequence associated to the upper short exact sequence)  and hence ${\widehat F}_{\sd n,m}([x])\ne 0$. 
Moreover, since  $f_m$ is trivial on $$C_\spinc\{i=0,j\leq
m-1\}=\filtY{m\!-\!1},$$ it factors through the map $p_{m-1}$.  Thus the map on homology, ${\widehat F}_{\sd n,m}$, factors through $P_{m-1}$.  If $m>\taux$, then $P_{m-1}([x])=0$ (again by the upper long exact sequence), and hence ${\widehat F}_{\sd n,m}([x])=0$.
\end{proof}

  Similarly, for $[y]\in \HFa(-Y)$ the dual invariant $\tauydual$ controls how $\Mapn$ maps Floer classes $\alpha \in \HFa(Y)$  which pair non-trivially with $[y]$:

\begin{prop}
	\label{prop:FourDInterpdual}  Let $[y]\!\ne\!0\in \HFa(-Y,\spinc)$ be a non-trivial Floer homology class and let $n>0$ be sufficiently large. Then we have
	\begin{itemize}
		\item 	If $m<\tauydual$, then for every $\alpha\in \HFa(Y,\spinc)$ such that  $\langle [y],\alpha\rangle \ne 0$ \newline $\Mapn(\alpha)\ne 0$
		\item If $m>\tauydual$, then there exists  $\alpha\in \HFa(Y,\spinc)$ such that $\langle [y],\alpha\rangle \ne 0$ and $\Mapn(\alpha)=0$.
	\end{itemize}

\end{prop}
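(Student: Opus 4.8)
The plan is to run the argument of Proposition~\ref{prop:FourDInterp} in dual form, reusing the commutative diagram~\eqref{eq:Diag1} verbatim; the two structural facts about $\Mapn=\widehat F_{\sd n,m}$ that are needed are already extracted there. First, commutativity of the right-hand square of~\eqref{eq:Diag1} gives, on homology, $q_*\circ\Mapn=P_m$, where $q_*\colon\HFa(Y_{\sd n}(K),[\spinc,m])\to H_*(Q_\spinc(Y,K,m))$ is induced by the bottom-row surjection $C_\spinc\{\min(i,j-m)=0\}\to Q_\spinc(Y,K,m)$. Second, since the middle vertical chain map $f_m$ vanishes on $\filtY{m-1}$, it factors through $p_{m-1}$, so on homology $\Mapn=G_m\circ P_{m-1}$ for some $G_m$ defined on $H_*(Q_\spinc(Y,K,m-1))$. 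The only new ingredient is the reformulation of $\tauydual$ straight from Definition~\ref{defn:tau_xdual}: the inequality $m<\tauydual$ says exactly that the functional $\langle[y],-\rangle$ annihilates $\Image I_m\subset\HFa(Y,\spinc)$, whereas $m>\tauydual$ (i.e.\ $m-1\ge\tauydual$) produces an $\alpha\in\Image I_{m-1}$ with $\langle[y],\alpha\rangle\ne0$.

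For the bullet $m>\tauydual$: pick $\alpha\in\Image I_{m-1}$ with $\langle[y],\alpha\rangle\ne0$. Exactness at $\HFa(Y,\spinc)$ of the long exact sequence of the top row of~\eqref{eq:Diag1} with $m-1$ in place of $m$ gives $\Image I_{m-1}=\Ker P_{m-1}$, so $P_{m-1}(\alpha)=0$; hence $\Mapn(\alpha)=G_m(P_{m-1}(\alpha))=0$. This $\alpha$ is the required class.

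For the bullet $m<\tauydual$: let $\alpha\in\HFa(Y,\spinc)$ with $\langle[y],\alpha\rangle\ne0$, and suppose toward a contradiction that $\Mapn(\alpha)=0$. Then $P_m(\alpha)=q_*(\Mapn(\alpha))=0$, so by exactness $\alpha\in\Ker P_m=\Image I_m$; but $m<\tauydual$ forces $\langle[y],\alpha\rangle=0$, contradicting the choice of $\alpha$. Hence $\Mapn(\alpha)\ne0$. As in Proposition~\ref{prop:FourDInterp}, all of this presupposes $n$ large enough for Theorem~$4.1$ of~\cite{Knots} to apply (the threshold depending on $m$ and $g(K)$), so that $C_\spinc\{\min(i,j-m)=0\}\simeq\CFa(Y_{\sd n}(K),[\spinc,m])$ and $f_m$ induces $\Mapn$.

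There is really no new obstacle here, since the diagram and the two facts about $\Mapn$ were built for Proposition~\ref{prop:FourDInterp}; the only thing to be careful about is the bookkeeping of the $\tauydual$ reformulation and using exactness of the two long exact sequences (at the indices $m-1$ and $m$) in the right direction. One could instead argue through the cobordism-duality identity~\eqref{eq:duality}, which exhibits $\Mapn$ as the adjoint of $\widehat F_{\overline{W}_{\sd n}(K),\spinct_m}$ under the pairings~\eqref{eq:pairing}, but that route first requires re-expressing $\overline{W}_{\sd n}(K)$ via surgery on $-Y$ and invoking the $-Y$ analogue of Proposition~\ref{prop:FourDInterp}, so the direct diagram argument above is cleaner.
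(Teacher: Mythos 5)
Your proof is correct and is exactly the argument the paper intends: the paper's own proof of this proposition simply says it is the same as Proposition \ref{prop:FourDInterp} bearing in mind Definition \ref{defn:tau_xdual}, and your dualized use of the diagram \eqref{eq:Diag1} (namely $q_*\circ\Mapn=P_m$ and $\Mapn=G_m\circ P_{m-1}$, combined with $\Image I_k=\Ker P_k$ and the reformulation of $m\lessgtr\tauydual$) is precisely that argument spelled out.
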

\begin{remark} Here $\langle -,-\rangle: \HFa(-Y)\otimes\HFa(Y)\rightarrow \Z/2\Z$ is the pairing (Equation \eqref{eq:pairing}) defined in Subsection \ref{subsec:Filtration}. \end{remark}
\begin{proof}The proof is the sames as the preceding Proposition, bearing in mind the definition of $\tauydual$.
\end{proof}

Given a non-vanishing Floer class $[y]\in \HFa(-Y,\spinc)$ we can use the filtrations $\Filt(-Y,K)$ and $\Filt(Y,K)$ to define  $\tauy$ or $\tauydual$, respectively.    The next proposition says that the two invariants are related by a change of sign.

\begin{prop} 
	\label{prop:reversal} (Compare Proposition $3.3$ of \cite{FourBall})
	Let $[y]\ne0\in \HFa(-Y,\spinc)$.  Then 
	$$\tauy=-\tauydual$$
\end{prop}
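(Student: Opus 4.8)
The plan is to prove the identity $\tauy = -\tauydualF$ by tracing through the duality machinery assembled in Propositions \ref{prop:Duality} and \ref{prop:knotduality}. The key observation is that $\tauy$ is defined using the filtration $\Filt(-Y,[F],K)$ on $\CFa(-Y,\spinc)$, measuring the smallest $m$ for which $[y]$ lies in the image of $-I_m$, while $\tauydualF$ is defined using the filtration $\Filt(Y,[F],K)$ on $\CFa(Y,\spinc)$, measuring the smallest $m$ for which some class in $\mathrm{Im}\,I_m$ pairs non-trivially with $[y]$ under the pairing of Equation \eqref{eq:pairing}. So both invariants are ``read off'' from the same pairing $\langle -,-\rangle:\HFa(-Y,\spinc)\otimes\HFa(Y,\spinc)\rightarrow\Z/2\Z$, but from opposite sides of it; the sign flip should come entirely from the ``$-m-1$'' appearing in Proposition \ref{prop:knotduality}.

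First I would reformulate $\tauy$. Since the pairing $\langle -,-\rangle$ between $\HFa(-Y,\spinc)$ and $\HFa(Y,\spinc)$ is nondegenerate (it is induced by the perfect pairing on chain level, and we work over $\Z/2\Z$ with no $\mathrm{Ext}$ issues, as remarked after Proposition \ref{prop:Duality}), the condition $[y]\in\mathrm{Im}(-I_m)$ is equivalent to: $[y]$ pairs trivially with every class in the annihilator of $\mathrm{Im}(-I_m)$. Dualizing the short exact sequence $0\to\Filt(-Y,K,m)\xrightarrow{-\iota_m}\CFa(-Y,\spinc)\xrightarrow{-p_m}Q_\spinc(-Y,K,m)\to 0$ and invoking Proposition \ref{prop:knotduality} (applied with $Y$ and $-Y$ interchanged, so that $Q_\spinc^*(-Y,K,m)\cong\Filt(Y,K,-m-1)$), one identifies the annihilator of $\mathrm{Im}(-I_m)\subset\HFa(-Y,\spinc)$ inside $\HFa(Y,\spinc)$ with the image of $I_{-m-1}:H_*(\Filt(Y,K,-m-1))\to\HFa(Y,\spinc)$. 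Therefore $[y]\in\mathrm{Im}(-I_m)$ if and only if $[y]$ pairs trivially with all of $\mathrm{Im}\,I_{-m-1}$, i.e. if and only if $-m-1<\tauydualF$ fails to be the relevant threshold — more precisely, if and only if $-m-1 \le \tauydualF - 1$, equivalently $m \ge -\tauydualF$.

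Chasing this equivalence through the definitions: $\tauy=\min\{m:\ [y]\in\mathrm{Im}(-I_m)\}=\min\{m:\ m\ge -\tauydualF\}=-\tauydualF$, which is the claim. The one point requiring care — and the step I expect to be the main obstacle — is getting the index bookkeeping exactly right: verifying that under the identification of Proposition \ref{prop:knotduality} the annihilator of $\mathrm{Im}(-I_m)$ is precisely $\mathrm{Im}\,I_{-m-1}$ (and not $I_{-m}$ or some neighboring index), and correctly negating the inequality ``$[y]$ pairs trivially with $\mathrm{Im}\,I_{-m-1}$'' $\iff$ ``$-m-1<\tauydualF$''. This amounts to carefully distinguishing the ``first index that hits'' convention in Definition \ref{defn:tau_xdual} from the ``largest index that misses'' phrasing, exactly as in the analogous sign computation of Proposition $3.3$ of \cite{FourBall}. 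Once the diagram in Proposition \ref{prop:knotduality} is matched up degree-by-degree, the rest is a short formal manipulation; I would present it as a chain of ``if and only if'' statements to make the single sign flip transparent.
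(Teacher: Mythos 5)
Your argument is correct and rests on the same machinery as the paper's proof: Proposition \ref{prop:knotduality}'s identification of the dual of the $\Filt(-Y,K,m)$ sequence with the $\Filt(Y,K,-m-1)$ sequence, together with the perfect pairing over $\Z/2\Z$. The paper phrases this as adjointness of $I_m$ and $-P_{-m-1}$ and proves two separate inequalities, whereas you package the same fact as the double-annihilator identity $\mathrm{Ann}(\mathrm{Im}(-I_m))=\mathrm{Im}(I_{-m-1})$ and run a single chain of equivalences; this is a presentational difference only, and your index bookkeeping ($-m-1<\tauydualF \iff m\ge-\tauydualF$) checks out.
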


\begin{proof} Proposition \ref{prop:knotduality} states that the short exact sequence corresponding to $\Filt(\sd Y,K,\sd m\sd 1)$ is naturally isomorphic to the dual of the short exact sequence coming from $\Filt(Y,K,m)$.  Specifically, recall Commutative Diagram \eqref{eq:knotduality}:
$$
	\begin{CD}
	0 \leftarrow \Filt^*(Y,K,m) @<{\iota_m^*}<< \CFa^*(Y,\spinc) @<{p_m^*}<< Q_\spinc^*(Y,K,m) \leftarrow 0 \\
  @V{\cong}VV @V{\cong}VV @V{\cong}VV  \\ 
 	0\leftarrow  Q_\spinc(\sd Y,K,\sd m\sd 1) @<{\sd p_{\sd m\sd 1}}<< \CFa(\sd Y,\spinc) @<{\sd \iota_{\sd m\sd 1}}<< \Filt(\sd Y,K,\sd m\sd 1) \leftarrow 0 \\
\end{CD}
$$	
	Thus the inclusion and projection maps $-\iota_{-m-1}$ and $-p_{-m-1}$  are identified with the dual maps $p_m^*$ and $\iota_m^*$, respectively.  If follows that the induced maps $\sd P_{\sd m \sd 1}$ and $I_m$ are adjoint with respect to the pairings of Equation \eqref{eq:pairing} and $(6)$ i.e. for any $[x]\in H_*(\Filt(Y,K,m))$ and $[y]\in \HFa(\sd Y)$ we have 
$$ \langle \sd P_{\sd m\sd 1}([y]),[x]\rangle_m = \langle [y], I_m ([x])\rangle.$$ \noindent  Suppose that $ \tauydual=m$. The definition then implies that there exists $\alpha=I_m(a)$ such that $$0\ne \langle [y],\alpha\rangle = \langle [y],I_m(a)\rangle = \langle \sd P_{\sd m \sd 1}([y]),a\rangle.$$  Thus $\sd P_{\sd m \sd 1}([y])\ne 0$.  This implies $[y]\notin \mathrm{Im}(\sd I_{\sd m \sd 1})$, by the long exact sequence coming from the lower short exact sequence in the above commutative diagram.  Hence $$\tauy\ge - m=- \tauydual.$$  We wish to show that the inequality is in fact an equality.  Assume, then, that 
$$\tauy=k>- \tauydual.$$   Then $[y]\notin \mathrm{Im}(-I_{k\sd 1})$ and hence $$0\ne \sd P_{k\sd 1}([y])\in H_*(Q_\spinc(\sd Y,K,k\sd 1))\cong H^*(\Filt^*(Y,K,\sd k)).$$  Thus there exists $a\in H_*(\Filt(Y,K,\sd k))$ such that    $$0\ne \langle \sd P_{k\sd 1}[y],a\rangle = \langle [y],I_{\sd k}(a)\rangle,$$ \noindent  It follows that $\tauydual \le - k$, contradicting the assumption.  

\end{proof}

Similar to the  \os \ concordance invariant, both $\taux$ and $\tauydual$ satisfy an additivity property under connected sums.  As in \cite{FourBall}, this  follows readily from 
Theorem $7.1$ of \cite{Knots}, which explains the behavior of the knot Floer homology filtration under the connected sum of knots.

\begin{prop}
	\label{prop:Additivity} (Compare Proposition $3.2$ of \cite{FourBall})
Let $K_1$ and $K_2$ be knots in three-manifolds $Y_1$ and $Y_2$, respectively and let $K_1\# K_2$
denote their connected sum. Then, for any pair of non-vanishing Floer classes $[x_i]\in \HFa(Y_i,\spinc_i)$, we have 
$$\tau_{[x_1]\otimes [x_2]}(Y_1\#Y_2, K_1\# K_2)=\tau_{[x_1]}(Y_1,K_1)+\tau_{[x_2]}(Y_2,K_2),$$
\noindent where $[x_1]\otimes [x_2]$ denotes the image of $[x_1]$ and $[x_2]$ under the isomorphism
$$\HFa(Y_1,\spinc_{1})\otimes_{\Z/2\Z} \HFa(Y_2,\spinc_{2}) \cong \HFa(Y_1\# Y_2,\spinc_{1}\#\spinc_{2}).$$
Similarly, for $\tau^*$ we have
$$\tau^*_{[y_1]\otimes [y_2]}(Y_1\#Y_2, K_1\# K_2)=\tau^*_{[y_1]}(Y_1,K_1)+\tau^*_{[y_2]}(Y_2,K_2),$$
\noindent for any non-vanishing $[y_i]\in \HFa(-Y_i,\spinc_i)$.

\end{prop}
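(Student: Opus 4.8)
The plan is to follow the argument for Proposition $3.2$ of \cite{FourBall}, with the filtered K\"unneth theorem for the knot Floer filtration---Theorem $7.1$ of \cite{Knots}, already recalled in the proof of Property $4$ above---as the essential input. Write $\tau_i := \tau_{[x_i]}(Y_i,K_i)$ and let $I^{K_i}_m \colon H_*(\Filt(Y_i,K_i,m)) \to \HFa(Y_i,\spinc_i)$ be the map induced by inclusion, so that $\tau_i = \min\{m \mid [x_i] \in \Image I^{K_i}_m\}$. First I would combine the chain homotopy equivalence $\CFa(Y_1\#Y_2,\spinc_1\#\spinc_2) \simeq \CFa(Y_1,\spinc_1)\otimes_{\Z/2\Z}\CFa(Y_2,\spinc_2)$ of \cite{HolDiskTwo} with Theorem $7.1$ of \cite{Knots} to identify, for each $m$, the subcomplex $\Filt(Y_1\#Y_2,K_1\#K_2,m)$ with the image of $\sum_{m_1+m_2=m}\iota^{K_1}_{m_1}\otimes\iota^{K_2}_{m_2}$, compatibly with the inclusions into the ambient complex. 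The inequality $\tau_{[x_1]\otimes[x_2]}(Y_1\#Y_2,K_1\#K_2) \le \tau_1+\tau_2$ is then immediate: picking cycles $z_i\in\Filt(Y_i,K_i,\tau_i)$ whose classes map to $[x_i]$, the cycle $z_1\otimes z_2$ lies in $\Filt(Y_1\#Y_2,K_1\#K_2,\tau_1+\tau_2)$ and represents $[x_1]\otimes[x_2]$.

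For the reverse inequality I would first compute the image of $I^{K_1\#K_2}_m$ inside $\HFa(Y_1,\spinc_1)\otimes_{\Z/2\Z}\HFa(Y_2,\spinc_2)$. Using the identification above together with the ordinary K\"unneth theorem over the field $\Z/2\Z$ (no $\Tor$ terms appear), this image equals $\sum_{m_1+m_2=m}\Image I^{K_1}_{m_1}\otimes\Image I^{K_2}_{m_2}$. The delicate point is passing from an identification of filtered \emph{complexes} to this statement about images of filtration levels on \emph{homology}, since the tensor-product filtration is a genuine sum over all splittings $m=m_1+m_2$ rather than being controlled by a single term; I would handle this by decomposing each filtered complex $\CFa(Y_i,\spinc_i)$ over $\Z/2\Z$ into a direct sum of ``interval'' pieces (a single generator carrying a cycle, or a two-generator acyclic summand) and checking the claim summand by summand, or alternatively by citing the corresponding step of \cite{FourBall}.

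It then remains to record the elementary fact that, for nonzero $[x_1]\otimes[x_2]$, membership in $\sum_{m_1+m_2=m}\Image I^{K_1}_{m_1}\otimes\Image I^{K_2}_{m_2}$ forces a \emph{single} splitting $m=m_1+m_2$ with $[x_i]\in\Image I^{K_i}_{m_i}$. I would prove this by mapping to $(\HFa(Y_1,\spinc_1)/\Image I^{K_1}_{\tau_1-1})\otimes_{\Z/2\Z}(\HFa(Y_2,\spinc_2)/\Image I^{K_2}_{\tau_2-1})$, where the image of $[x_1]\otimes[x_2]$ is a tensor product of nonzero vectors and hence nonzero, while $\Image I^{K_i}_{m_i}$ maps to zero there whenever $m_i<\tau_i$; a nonzero element can thus lie in the image of the sum only if some splitting satisfies $m_1\ge\tau_1$ and $m_2\ge\tau_2$, i.e.\ $m\ge\tau_1+\tau_2$. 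Together with the first paragraph this yields $\tau_{[x_1]\otimes[x_2]}(Y_1\#Y_2,K_1\#K_2)=\tau_1+\tau_2$.

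The statement about $\tau^*$ then follows formally: Proposition \ref{prop:reversal} gives $\tau^*_{[y_i]}(Y_i,K_i)=-\tau_{[y_i]}(-Y_i,K_i)$, and $-(Y_1\#Y_2)\cong(-Y_1)\#(-Y_2)$ carries $K_1\#K_2$ to $K_1\#K_2$ and intertwines the K\"unneth isomorphism with the orientation-reversal dualities, so applying the additivity of $\tau$ just proved to $(-Y_1,K_1)$ and $(-Y_2,K_2)$ and negating gives the claimed formula. The main obstacle throughout is the bookkeeping of the first two paragraphs---extracting the precise description of $\Image I^{K_1\#K_2}_m$ from the filtered K\"unneth theorem---rather than any genuinely new idea.
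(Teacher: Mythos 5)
Your proposal follows the same route as the paper's proof: Theorem $7.1$ of \cite{Knots} identifies the filtration of $\CFa(Y_1\#Y_2)$ with the tensor-product filtration, one deduces that $[x_1]\otimes[x_2]\in\Image I^{K_1\#K_2}_m$ exactly when some splitting $m=m_1+m_2$ has $[x_i]\in\Image I^{K_i}_{m_i}$, and the $\tau^*$ statement is obtained formally from Proposition \ref{prop:reversal} applied to $-Y_1$ and $-Y_2$. The only difference is that you spell out the step the paper dispatches with ``it follows that'' (the field-coefficient K\"unneth computation of $\Image I^{K_1\#K_2}_m$ and the quotient argument forcing a single splitting), which is a correct and welcome elaboration rather than a different approach.
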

\begin{remark} The Seifert surface used for $K_1\#K_2$ should, in each case, be the boundary connected sum, $F_1\natural F_2$, of the Seifert surfaces $F_1$ and $F_2$ used for $K_1$ and $K_2$, respectively. 
\end{remark}
\begin{proof} According to Theorem $7.1$ of \cite{Knots}, the filtration of $\CFa(Y_1\#Y_2,\spinc_1\#\spinc_2)$ induced by $K_1\# K_2$ is filtered chain homotopy equivalent to the filtration of $\CFa(Y_1,\spinc_{\xi_1})\otimes_{\Z/2\Z} \CFa(Y_2,\spinc_{\xi_2})$ induced by the tensor product of inclusion maps for $K_1,K_2$:

$$
\begin{CD}
	\underset{m_1+m_2=m}{\sum} \iota_{m_1}^{K_1}\otimes \iota_{m_2}^{K_2}\colon \!\!\!
	\underset{m_1+m_2=m}{\bigoplus} \!\!\!\mathcal{F}_{\spinc_1}( Y_1,K_1,m_1)\otimes\mathcal{F}_{\spinc_2}(Y_2,K_2,m_2) @>>> \CFa( Y_1,\spinc_1)\otimes \CFa( Y_2,\spinc_2) \\
@V{\cong}VV @V{\cong}VV \\
\iota_m^{K_1\#K_2}\ \ \ \colon  \ \ \ \ \ \mathcal{F}_{\spinc_1\#\spinc_2}(Y_1\#Y_2,K_1\# K_2,m) @>>> \CFa(Y_1\#Y_2,\spinc_1\#\spinc_2).
\end{CD} 
$$	
\noindent It follows that  
Im$(I^{K_1\#K_2}_m)$ contains $[x_1]\otimes [x_2]$  if and only if there
is a decomposition $m=m_1+m_2$ such that Im$(I^{K_1}_{m_1})$ and  Im$(I^{K_2}_{m_2})$
contain $[x_1]$ and $[x_2]$, respectively.
The minimum value of $m$ for which this occurs is clearly $m=\tau_{[x_1]}(Y_1,K_1)+\tau_{[x_2]}(Y_2,K_2)$.  Additivityof $\tau^*_{[y]}$ follows from the additivity of $\tau_{[x]}$ just proved, and the preceding proposition.  Indeed, we have:
$$-\tau^*_{[y_1]\otimes [y_2]}(Y_1\#Y_2, K_1\# K_2)= \tau_{[y_1]\otimes [y_2]}(\sd Y_1\#\sd Y_2, K_1\# K_2)= $$ 
$$  \ \ \ \ =\tau_{[y_1]}(\sd Y_1,K_1)+\tau_{[y_2]}(\sd Y_2,K_2)=-\tau^*_{[y_1]}(Y_1,K_1)-\tau^*_{[y_2]}(Y_2,K_2),$$
\noindent where the first and last equalities follow from  Proposition \ref{prop:reversal}, and the middle equality from the first part of the present proposition applied to the manifolds $-Y_1,-Y_2$.  
\end{proof}

\section{Proof of Theorems}
\label{sec:proofs}
We now apply the  general properties of $\tauy$ and $\tauydual$ established in the previous section to the special case when $[y]=c(\xi)\in \HFa(-Y,\spinc_\xi)$.  Recall that in this case we have denoted $\tau^*_{\contact}(Y,K)$ by $\tauc$.  Throughout this section, we will fix the homology class of Seifert surface once and for all, so that any invariant or identification depending on this choice will use the same $[F]\in H_2(Y;\Z)$.  In light of this, we will simplify the notation by omitting $[F]$ whenever possible.    

\bigskip

\noindent {\bf Proof of Theorem \ref{thm:tbbounds}:} We follow Plamenevskaya's proof \cite{Olga} of the analogous theorem for $K\hookrightarrow (S^3,\xi_{std})$ and $\tau_{\xi_{std}}(S^3,K)=\tau(K)$.  
Assume that we have a Legendrian representative $\tilde{K}$ with Thurston-Bennequin and rotation numbers $tb(\tilde{K})$ and $rot(\tilde{K})$, respectively.   Since changing the orientation of the knot changes the sign of
its rotation number, it suffices to prove the inequality
$$ tb(\tilde{K}) + rot(\tilde{K}) \le 2\tauc - 1,  $$
\noindent for any oriented Legendrian knot. This is because $\tauc$, and indeed the $\Z\oplus \Z$ filtered chain homotopy type of $\CFKinf(Y,K)$, is independent of the orientation on $K$ (see Proposition $3.8$ of \cite{Knots}).   
According to \cite{Weinstein,Gompf} there is a symplectic cobordism $(\sympW,\omega)$ between $(Y,\xi)$ and $(Y_K,\xi_K)$ induced by Legendrian surgery along $\tilde{K}$ which satisfies:
$$\langle c_1(\spinck),[\widehat{F}]\rangle = rot_F(\tilde{K}),$$
$$ [\widehat{F}]\cm[\widehat{F}]=tb(\tilde{K})-1,$$
 \noindent where $\spinck$ is the canonical $\SpinC$ structure associated to $(\sympW,\omega)$. The naturality property of the contact invariant (Property $3$ in Subsection \ref{subsec:contact}) indicates that 
 \begin{equation}
	 \label{eq:naturality}
	\widehat{F}_{\overline{W}_K,{\spinck}}(c(\xi_K))=c(\xi)
\end{equation}
\noindent 

Pick any homogeneous $\alpha\in \HFa(Y,\spinc_\xi)$ which pairs non-trivially with $\contact\in \HFa(-Y,\spinc_\xi)$ under Equation \eqref{eq:pairing}.  It follows from Equations \eqref{eq:naturality} and \eqref{eq:duality} that:
$$0\ne \langle \contact,\alpha \rangle = \langle \widehat{F}_{\overline{W}_K,\spinck}(c(\xi_K)), \alpha\rangle = \langle c(\xi_K), \widehat{F}_{\sympW,\spinck}(\alpha)\rangle,$$
\noindent and hence that $\widehat{F}_{\sympW,\spinck}(\alpha)\ne 0$.
	 
Thus every homogeneous class pairing non-trivially with $c(\xi)$ is mapped non-trivially by $\widehat{F}_{\sympW,\spinck}$, and so we wish to use Proposition \ref{prop:FourDInterpdual} to bound $\langle c_1(\spinck),[\widehat{F}]\rangle+ [\widehat{F}]^2$   in terms of $\tauc$.  To carry this out, we stabilize $\tilde{K}$ (i.e. add kinks to the front projection of a portion of $\tilde{K}$ contained in a Darboux neighborhood) to decrease the Thurston-Bennequin number and increase the rotation number while keeping $tb + rot_F$ constant.  Thus, without loss of generality we may  assume that the framing $tb-1=-n$ for the Legendrian surgery is sufficiently negative for Proposition \ref{prop:FourDInterpdual} to hold. This immediately yields 
$$ rot_F(\tilde{K})+ tb(\tilde{K})-1 = \langle c_1(\spinck),[{\widehat F}]\rangle - n = 2m \le 2\tauc.$$
\noindent  We have shown that $ tb(\tilde{K}) + rot_F(\tilde{K}) \le 2\tauc +1.$ To achieve the stated inequality, we examine the connected sum $K\# K\hookrightarrow (Y\#Y,\xi\#\xi)$.  It is straightforward to see that that:
\begin{equation}\label{eq:connectedtb}
\underset{\widetilde{K\# K}}{\mathrm{max}}\ [ tb(\widetilde{K\# K}) + rot_{F\natural F}(\widetilde{K\# K}) ]\ge  2\ \underset{\tilde{K}}{\mathrm{max}}\ [ tb(\tilde{K})+ rot_F(\tilde{K})]+1,
\end{equation}
\noindent where the maximum on both sides is taken over all Legendrian representatives of $K\#K$ and $K$, respectively. Indeed, to see the inequality, simply take the connected sum of a particular representative, $\tilde{K}$, of $K$ maximizing $tb(\tilde{K})+rot_F(\tilde{K})$.  Under this sum, $tb(\tilde{K}\#\tilde{K})=2tb(\tilde{K})+1$ and $rot_{F\natural F}(\tilde{K}\#\tilde{K})=2rot_F(\tilde{K})$, establishing the inequality (in fact, work of Etnyre and Honda \cite{Etnyre3} shows that equality is always satisfied in \eqref{eq:connectedtb}).   Now Property $4$ of the contact invariant in Subsection \ref{subsec:contact} states that $c(\xi\#\xi)=c(\xi)\otimes c(\xi)$.  It then follows from the additivity of $\tauyup$ under connected sums (Proposition \ref{prop:Additivity}) that $\tau_{\xi\#\xi}(K\# K) = 2\tau_\xi(K)$.  Combining this with inequality \eqref{eq:connectedtb}, we have
$$ 2(tb(\tilde{K})+rot_F(\tilde{K}))+1 \le \underset{\widetilde{K\# K}}{\mathrm{max}}\ [tb(\tilde{K\# K}) + rot_{F\natural F}(\tilde{K\# K})]\le $$ $$ \ \ \ \ \ \ \ \ \ \ 2\tau_{\xi\#\xi}(K\# K)+1 =4\tauc +1 ,$$
\noindent Where $\tilde{K}$ is any Legendrian representative of $K$. In other words, we have
$$ tb(\tilde{K})+rot_F(\tilde{K}) \le 2\tauc$$
\noindent The theorem follows from the observation that $tb+rot$ is always odd since 
$$rot_F(K)= \langle c_1(\spinck),[\widehat{F}]\rangle = [\widehat{F}]\cm [\widehat{F}] = tb(K)-1 \mod2,$$
\noindent which in turn follows from the fact that $c_1(\spinck)$ is characteristic.

$\square$

\bigskip
\noindent {\bf Proof of Theorem \ref{thm:fibered}:} The Theorem will follow from Theorem \ref{thm:tbbounds} and Proposition \ref{prop:reversal}, together with the  
definitions of the contact invariant and $\tauc$.  Assume we have a Legendrian realization $\tilde{K}$ of the fibered knot $(F,K)$ for which:
	 $$tb(\tilde{K})+|rot_F(\tilde{K})|= 2g(F)-1.$$
	 By Theorem  \ref{thm:tbbounds} we have that $tb(\tilde{K})+|rot_F(\tilde{K})|\le 2\tauc-1$.  However, the adjunction inequality (Theorem $5.1$ of \cite{Knots}) states that $$\HFKa(Y,[F],K,m)=0 \ \ \mathrm{if\ } |m| > g(F).$$
	 Since	$H_*(\frac{\filtY{m}}{\filtY{m-1}}):= \HFKa(Y,[F],K,m)$, it  follows that $$H_*(\filtY{m})\cong H_*(\filtY{m-1}) \ \ \mathrm{if \ } m > g(F).$$  Thus $H_*(\filtY{g(F)})\cong \HFa(Y,\spinc)$ implying that $\tauc\le g(F)$.

	Summarizing, we have:
	$$2g(F)-1 = tb(\tilde{K})+|rot_F(\tilde{K})|\le 2\tauc-1 \le 2g(F)-1.$$
	\noindent	Thus $\tauc=g(F)$.  Now Proposition \ref{prop:reversal} tells us that
	$$\tau_{\contact}(-Y,K)=-\tau^*_{\contact}(Y,K):=-\tauc,$$
	\noindent and hence that $\tau_{\contact}(-Y,K)=-g(F)$.	

Recall that since $(F,K)$ is fibered,    
$$H_*({\mathcal{F}}_{\spinc_F}(\sd Y,[F],K,\sd g(F)))\cong \Z/2\Z.$$
Here, $\spinc_F$ is the $\SpinC$ structure associated to the plane field coming from the open book of $(F,K)$.  Furthermore, if we let $c_0$ be a generator, the definition of $\tau_{\contact}(-Y,K)$ (Definition \ref{defn:tau_x}) implies that $\sd I_{-g(F)}(c_0)=c(\xi)$ where, as usual, $\sd I_{-g(F)}$ is the map on homology induced by the inclusion:
$$\sd \iota_{-g(F)}: {\mathcal{F}}_{\spinc_F}(\sd Y,[F],K,\sd g(F))\hookrightarrow \CFa(\sd Y,\spinc_F).$$
\noindent On the other hand, $\sd I_{-g(F)}(c_0)=c(\xi_{(F,K)})$, by the definition of the contact invariant.

$\square$

\bigskip
\noindent {\bf Proof of Theorem \ref{thm:arbneg}:} Theorem \ref{thm:arbneg} will follow from a more precise result involving the behavior of $\tauc$ under the cabling operation, which we now review.
Recall that to a knot $(Y,K)$ and choice of Seifert surface, $F$, there is a canonical identification of the boundary of a neighborhood of $K$, $\nu(K)$ with a torus i.e. $\partial\nu(K)\cong S^1\times S^1$.  The identification is such that $\{pt\}\times S^1 \equiv \lambda$ and $S^1 \times \{pt\} \equiv \mu$, where  $\lambda$ is the longitude of $K$ coming from $F$ and $\mu$ is the meridian of $K$.  Given this identification, we can form a new knot, the $(p,q)$ {\em cable of} $K$.  By definition the $(p,q)$ {\em cable of} $K$ is the isotopy class of a simple closed curve on $\partial \nu(K)$ of slope $\frac{p}{q}$ with respect to the framing of $\partial \nu(K)$ given by $(\lambda,\mu)$.  Theorem \ref{thm:arbneg} is an immediate consequence of

 \begin{theorem}
	 \label{thm:cable}
	 Let $K\hookrightarrow Y$ be a null-homologous knot with Seifert surface $F$ and let $K_{p,q}$ denote the $(p,q)$ cable of $(Y,K)$.  Suppose $\xi$ is a contact structure on $Y$ with $\contact\ne0$.  Then for any $N,p>0$ there exists a $q=q(N,p)>0$ such that $$\tau_\xi([F],K_{p,-q})< -N,$$
 \end{theorem}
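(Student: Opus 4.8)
\medskip

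The plan is to derive Theorem~\ref{thm:cable} from the description of the knot Floer homology filtration of cables obtained in \cite{CableII}, using nothing about $c(\xi)$ beyond its non-vanishing. Recall from Definition~\ref{defn:tauc} that $\tau_\xi([F],K_{p,-q})=\tau^{*}_{c(\xi)}(Y,[F'],K_{p,-q})$, where $F'$ is the Seifert surface of the cable built in the standard way from $p$ parallel copies of $F$ and a twisted band surface inside $\nu(K)$; its homology class is $p[F]$, which is why $[F]$ appears in the statement. The structural point is that $\tau^{*}_{[y]}(Y,[F'],K')$ is read off entirely from the $\Z$-filtered chain homotopy type of $\CFa(Y,\spinc_\xi)$ induced by $(F',K')$, together with the fixed class $[y]$ and the pairing of Equation~\eqref{eq:pairing} (and, if convenient, one may instead pass to $\tau_{c(\xi)}(-Y,[F'],K_{p,-q})$ via Proposition~\ref{prop:reversal}). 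Consequently any computation of that filtered chain homotopy type for $K_{p,-q}$ immediately yields the value of $\tau^{*}_{c(\xi)}$, or at least a one-sided bound on it.

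First I would recall the relevant output of \cite{CableII} and observe that it localizes to the present setting. Cabling is supported inside the solid torus $\nu(K)$, so a doubly-pointed Heegaard diagram for $(Y,K_{p,-q})$ is obtained from one for $(Y,K)$ by an explicit local modification; hence the filtration computations of \cite{CableII}, although phrased there for knots in $S^3$, apply with no essential change to a null-homologous knot in an arbitrary three-manifold $Y$. What I need is that, for fixed $p\ge 2$, there is a $q_0=q_0(p,K)$ such that for all $q\ge q_0$ the filtration of $\CFa(Y,\spinc_\xi)$ induced by $(F',K_{p,-q})$ is, in the range of filtration levels relevant to the definition of $\tau^{*}$, determined by the filtration induced by $(F,K)$ through the explicit algebraic construction of \cite{CableII}. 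Reading $\tau^{*}_{c(\xi)}$ off this, by an argument parallel to the proof of Proposition~\ref{prop:Additivity} and exactly as $\tau(K_{p,q})$ is computed in \cite{CableII}, yields a bound of the form
$$ \tau_\xi([F],K_{p,-q})\ \le\ p\cdot\tau_\xi([F],K)\ -\ \frac{(p-1)(q-1)}{2}\ +\ c(p,K),\qquad q\ge q_0(p,K), $$
where $c(p,K)$ is independent of $q$. Only an inequality in this direction is needed, so there is no need to determine $c(p,K)$; as a consistency check, for $K$ the unknot in $S^3$ this recovers $\tau(T_{p,-q})=-\tfrac{(p-1)(q-1)}{2}$.

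Granting this, the theorem follows immediately: for $p\ge 2$ the coefficient of $q$ on the right is $-\tfrac{p-1}{2}<0$, so the right-hand side tends to $-\infty$ as $q\to\infty$, and given $N>0$ one chooses $q=q(N,p)$ coprime to $p$ and large enough that it drops below $-N$. Theorem~\ref{thm:arbneg} then follows: taking $K\hookrightarrow Y$ to be a fixed prime knot, Theorem~\ref{thm:cable} produces for each $N$ a knot $K_{p,-q}$ with $\tau_\xi([F],K_{p,-q})<-N$, which is again prime (being a cable of a prime knot), and for which $tb(\tilde{K})+|rot_F(\tilde{K})|\le 2\tau_\xi([F],K_{p,-q})-1<-N$ for every Legendrian representative $\tilde{K}$, by Theorem~\ref{thm:tbbounds}.

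The genuinely hard content lies in \cite{CableII}, not in this deduction; what remains here is bookkeeping, and the two points needing care are: (i) confirming that the $S^3$ cabling computations really localize to an arbitrary $Y$ --- they do, since cabling alters a Heegaard diagram only inside $\nu(K)$ --- and (ii) tracking the sign of the $q$-dependence of the filtration shift, so that it is the \emph{negatively}-framed cables $K_{p,-q}$ whose filtration, and hence whose $\tau^{*}_{c(\xi)}$, is driven to $-\infty$ rather than $+\infty$. The passage from the concordance invariant to the contact class $c(\xi)$ costs nothing: $\tau^{*}_{[y]}$ is extracted from the same filtered data for every non-vanishing class $[y]$.
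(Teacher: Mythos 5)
Your proposal is correct and follows essentially the same route as the paper: both arguments import the stabilization of the knot filtration under sufficiently negative cabling from \cite{CableII}, deduce an upper bound on $\tau_\xi$ of $K_{p,-q}$ of the form (a quantity independent of $q$) $-\tfrac{(p-1)(q-1)}{2}$, and let $q\to\infty$. The only difference is cosmetic: the paper pins down the $q$-independent term as $pg(K)$ by combining the chain homotopy equivalence of filtration levels with the adjunction inequality (surjectivity of $I_m$ at level $-g(K_{p,-q})+2pg(K)$), whereas you leave it as an undetermined constant $c(p,K)$, which suffices for the conclusion.
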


 \begin{proof} We rely on results of \cite{MyThesis} and \cite{CableII}.  In particular, \cite{CableII} proves the following 
\begin{theorem}
Let $K\hookrightarrow Y$. Pick any $l>0$. Then there exists a 
$q(l)>0$ such that for all $\sd q<\sd q(l)$ we have a chain homotopy equivalence:

$$    \Filt(Y,K_{p,\sd q},\sd g(K_{p,\sd q})+pj)\cong \Filt(Y,K,\sd g(K)+j), $$
\noindent for every $j< l$, any homology class of Seifert surface $[F]$, and every $\spinc \in \SpinC(Y)$.
\end{theorem}
\bigskip

\noindent In particular, pick $l=2g(K)+1$ and $\spinc=\spinc_\xi$ in the above theorem. Then 
$$      H_*(\mathcal{F}_{\spinc_\xi}(Y,K_{p,\sd q},\sd g(K_{p,\sd q})+2pg(K)))\cong H_*({\mathcal{F}_{\spinc_\xi}}(Y,K,g(K))). $$
However, by the adjunction inequality for knot Floer homology, the right hand side of the above is equal to $\HFa(Y,\spinc_\xi)$. Thus the inclusion 
$$\iota_{\sd g(K_{p,\sd q})+2pg(F)}:\Filt(Y,K_{p,\sd q},\sd g(K_{p,\sd q})+2pg(K))\hookrightarrow \CFa(Y)$$ is surjective on homology and hence $\tau_\xi(Y,K_{p,\sd q})\le \sd g(K_{p,\sd q})+2pg(K)$.  On the other hand 
$$g(K_{p,\sd q})=pg(K)+\frac{(p- 1)(q- 1)}{2},$$ so $$\sd g(K_{p,\sd q})+2pg(K)=pg(K)-\frac{(p-1)(q-1)}{2},$$
and thus $$\tau_\xi(Y,K_{p,\sd q})\le pg(K)-\frac{(p-1)(q-1)}{2}.$$ Now, for fixed $p$, the right hand side can be made arbitrarily negative provided we choose $\sd q<0$ to be sufficiently negative.

\end{proof}

\commentable{
\bibliographystyle{plain}
\bibliography{biblio}
}

\end{document}